\newcommand{\aaa}{\alpha}
\newcommand{\eps}{\varepsilon}
\renewcommand{\ge}{\geqslant}
\newtheorem{theorem}{Theorem}
\newtheorem{proposition}{Proposition}
\newtheorem{corollary}{Corollary}
\newtheorem{lemma}{Lemma}
\newtheorem*{claima}{Claim}
\newtheorem{claim}{Claim}
\newenvironment{claimm}{\setcounter{claim}0\begin{claim}}{\end{claim}}
\theoremstyle{remark}
\newtheorem{definition}{Definition}
\newtheorem{remark}{Remark}
\newtheorem{example}{Example}
\renewcommand{\SS}{\mathcal S}
\newcommand{\T}{\mathcal T}
\newcommand{\RR}{\mathcal R}
\newcommand{\F}{z}
\newcommand{\HH}{h}
\newcommand{\arrows}{A2}
\newcommand{\lthree}{\overleftarrow 3}
\newcommand{\rthree}{\overrightarrow 3}
\newcommand{\lfour}{\overleftarrow 4}
\newcommand{\rfour}{\overrightarrow 4}
\newcommand{\lsix}{\overleftarrow 6}
\newcommand{\rsix}{\overrightarrow 6}
\newcommand{\lfive}{\overleftarrow 5}
\newcommand{\rfive}{\overrightarrow 5}
\newcommand{\ltr}{\overleftarrow 3}
\newcommand{\rtr}{\overrightarrow 3}
\newcommand{\lsx}{\overleftarrow 6}
\newcommand{\rsx}{\overrightarrow 6}
\newcommand{\lfr}{\overleftarrow 4}
\newcommand{\rfr}{\overrightarrow 4}
\newcommand{\lfv}{\overleftarrow 5}
\newcommand{\rfv}{\overrightarrow 5}
\newcommand{\sss}{S}
\title{On the structure of Ammann A2 tilings\thanks{The work
was in part supported by was partially supported
by the Russian Academic Excellence Project `5-100',
by the RFBR grant 16-01-00362,  and the ANR grant
RaCAF ANR-15-CE40-0016-01}}
\author{Bruno Durand\thanks{LIRMM,
 Universit\'e de Montpellier II}, 
Alexander Shen\footnotemark[2],
Nikolay Vereshchagin\thanks{Moscow State University
  and
  National Research University
Higher School of Economics}}
\date{}
\begin{document}

\maketitle

\begin{abstract}
We establish a structure theorem 
for the family of Ammann A2 tilings of the plane.
Using that theorem
we show that 
every Ammann A2 tiling 
is self-similar in the sense of [B. Solomyak,
Nonperiodicity implies unique composition for
self-similar translationally finite tilings, Discrete and Computational
Geometry 20 (1998) 265-279].
By the same techniques
we show that Ammann A2 tilings are not robust in the
sense of
[B. Durand, A. Romashchenko, A. Shen.
Fixed-point tile sets and their applications,
Journal of Computer and System Sciences,
78:3 (2012) 731--764].
\end{abstract}

\section{Introduction}
\label{intro}

There is a 
non-convex hexagon with right angles that has the following property. 
It can be cut into two similar hexagons so that the scaling factors
are equal to $\psi$ and $\psi^2$, where 
$\psi<1$ (see Fig.~\ref{f2}).
\begin{figure}[t]
        \begin{center}
\includegraphics[scale=0.7]{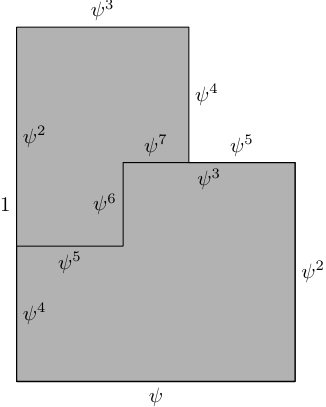}
        \end{center}
\caption{\small Cutting the Golden Bee into similar parts.}\label{f2}
\end{figure}
As the area of the original hexagon is equal
to the sum of areas of the parts, the number $\psi$ satisfies the
equation
        $$
\psi^4 + \psi^2 = 1.
        $$
That is, $\psi$ is the square root of the golden ratio:
$\psi=\sqrt{\frac{\sqrt5-1}{2}}$.

The numbers on the sides in Fig.~\ref{f2}
indicate their lengths, which are powers of $\psi$.
Using the equation $\psi^{n+2}+\psi^{n+4}=\psi^n$, 
it is easy to verify 
that the picture is consistent.
Following Scherer~\cite{sherer}, we will call any hexagon that is similar to 
that on Fig.~\ref{f2} a \emph{Golden Bee}.
The \emph{size} of a Golden Bee is defined as
the length of its largest side.

We fix a positive real $d$ and consider 
Golden Bees of sizes $d$  and $\psi d$ as tiles (see Fig.~\ref{f89}),
\begin{figure}[t]
       \begin{center}
\includegraphics[scale=0.5]{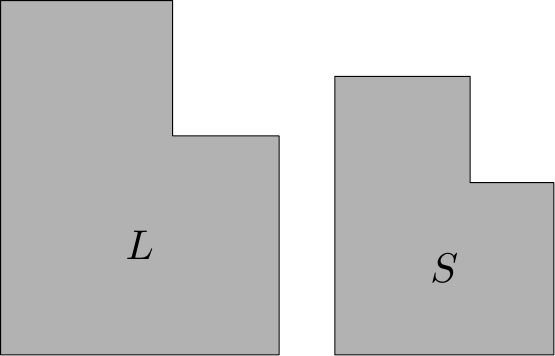}
        \end{center}
\caption{Ammann tiles.}\label{f89}
\end{figure} 
called \emph{large} and \emph{small} $d$-tiles.
Tilings of the plane or its parts by these two tiles will be called
\emph{$d$-tilings}.
If a $d$-tile $P$ is cut into small and large $d\psi$-tiles,
as shown  in Fig.~\ref{f2}, then we
call the large and small parts
the  \emph{son} and the \emph{daughter}
of $P$, respectively. We also call the small part the 
\emph{sister} of the large part
and call the large  part the 
\emph{brother} of the small part.

For a
$d$-tiling $T$ we denote by $\sigma T$ the $d\psi$-tiling
obtained from $T$ by the substitution shown on Fig.~\ref{f58}:
\begin{figure}[t]
\begin{center}
\includegraphics[scale=1]{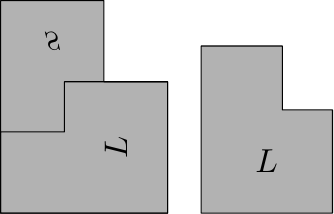}
        \end{center}
 \caption{\small Substitution: every large tile (on the left)
is cut into small and large tiles,
and every small tile (on the right) becomes a large tile.}\label{f58}
\end{figure}
we cut each large $d$-tile in two smaller tiles,
as shown on Fig.~\ref{f2}, and keep small $d$-tiles intact.
Small $d$-tiles thus become large $d\psi$-tiles of the resulting
$d\psi$-tiling.
It is not hard to prove that
$\sigma$ in an
\emph{injective}
mapping.

Gr\"unbaum and
Shephard~\cite{GS} considered three 
families of $d$-tilings  of the plane.
Those families are defined by means of rules 
governing how one may attach tiles to each other when tiling the plane
\cite[Fig. 10.4.1(a)]{GS}, 
\cite[Fig. 10.4.1(c)]{GS} and \cite[Fig. 10.4.1(d)]{GS}.
All the three families are called 
``A2''
and are attributed to
Robert Ammann.
The common name for these three families assumes
that the families coincide. This is indeed true
but is not evident and is not proven in~\cite{GS} or elsewhere
(we know that the families coincide
from a personal communication of Korotin~\cite{korotin}).
Yet another similar rule 
was introduced by Akiyama~\cite{A}.
One can show~\cite{korotin} that the family of tilings 
satisfying Akiyama's rule coincides with the A2 family.

All the three rules of~\cite{GS}, as well as Akiyama's rule,
imply the following  \emph{unique composition property}:
\begin{quote}
  {\em
    For  
each $d\psi$-tiling $T'$ from A2 there is a (unique)  
$d$-tiling $T$ in A2 such that $T'=\sigma T$.}
\end{quote}
A well known ``folklore'' theorem (see~\cite[Theorem 10.1.1]{GS})
states that the unique composition property 
implies that all the tilings in the family are 
non-periodic\footnote{\emph{Sketch of proof.} 
Assume that a tiling $T'$ from the family 
has a non-zero period
$t$, that is $T'+t=T'$. 
Let $T$ be the (unique) tiling from the family
such that $T'$ can be obtained from $T$ by the substitution.
Then $t$ is a period 
of the tiling $T$ as well. Indeed, since substitution and
shift commute, the substitution applied 
to the tiling $T+t$ produces $T'+t$, which equals $T'$
by assumption. The uniqueness implies that  $T+t=T$.
Similarly, $T$ can by obtained by the substitution
from another tiling from the family,
which has also period $t$. In this 
way we can obtain tilings of the plane with tiles of arbitrarily large
size whose period is $t$, which is obviously impossible.}.
Hence all A2 tilings are non-periodic.

In this paper, we focus on the first A2 family
from \cite[Chapter 10.4]{GS} defined by the following 
\emph{Arrow rule}\label{lr1} (see \cite[Fig. 10.4.1(a)]{GS}):
\begin{quote}
{\em Color in a given 
tiling 
the sides of large and small tiles, 
as shown in Fig.~\ref{orient}(b,c).
The Arrow rule requires that for every pair of adjacent tiles 
each arrowed edge must fit against an edge with the same 
color pointing in the same direction.}
\end{quote}
We will call 
tilings that satisfy this rule 
\emph{A2  tilings}.
\begin{figure}
        \begin{center}
\includegraphics[scale=0.7]{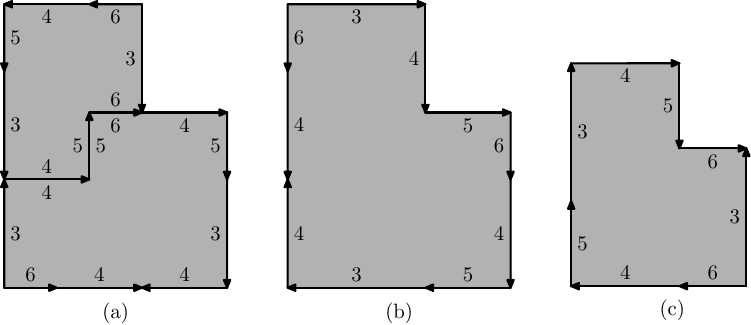}
        \end{center}
\caption{\small \small The Arrow rule for A2  tilings.
The sides of tiles in this figure
are divided into segments labeled by digits with arrows.
Digits represent the colors and arrows identify  
orientations of segments.
Digits correspond to the lengths 
of segments ($i$ means the length proportional to $\psi^i$).
Each arrowed edge must fit against an edge with the same 
label pointing in the same direction, e.g., as in (a).}\label{orient}
\end{figure}
Our main result describes the structure 
of A2 tilings of the plane (Theorem~\ref{description}) 
in the following terms.
A \emph{supertile} is a tiling that is obtained from a single 
large tile by applying to it $n$ substitutions
for some natural $n$, which is called the
\emph{level} of the supertile (see Fig.~\ref{f1}).
\begin{figure}[t]
        \begin{center}
          \includegraphics[scale=0.25]{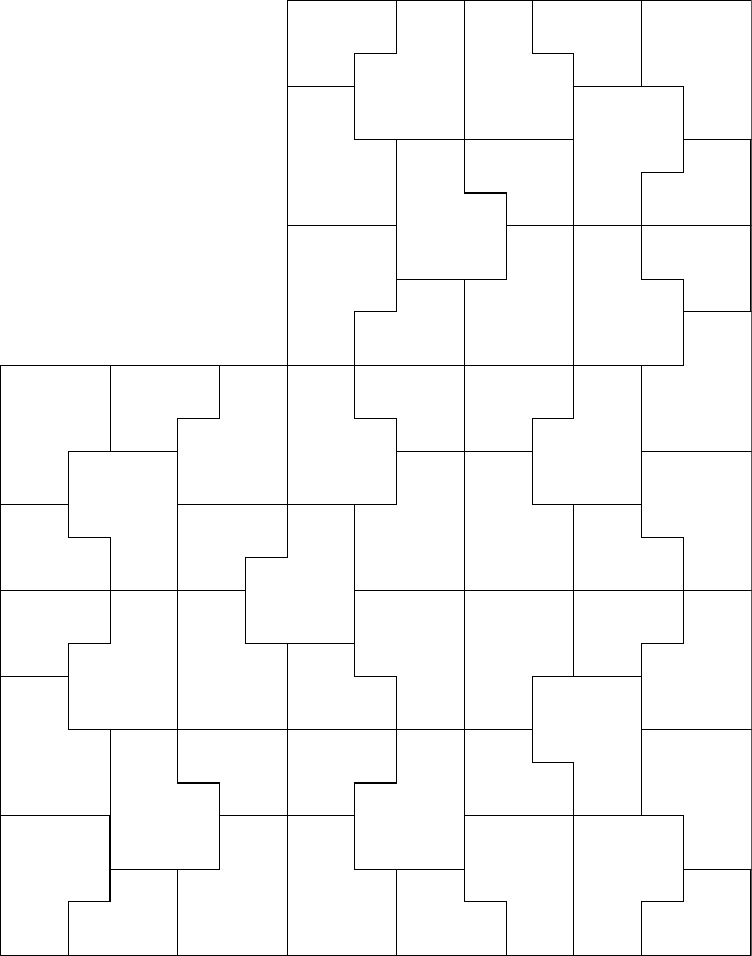}
        \end{center}
\caption{\small A supertile of level 8.
}\label{f1}
\end{figure}
Each supertile tiles the  
tile from which it was produced by substitutions.
An \emph{infinite supertile} is 
a union of an infinite chain of supertiles
$$
T_0\subset T_1\subset T_2\subset\dots 
$$
such that for all $n$ the tiling
$T_n$ tiles either the son, or the daughter of the
tile tiled by $T_{n+1}$ (see Fig.~\ref{supertile}).
\begin{figure}[t]
        \begin{center}
\raisebox{1cm}{\includegraphics[scale=0.1]{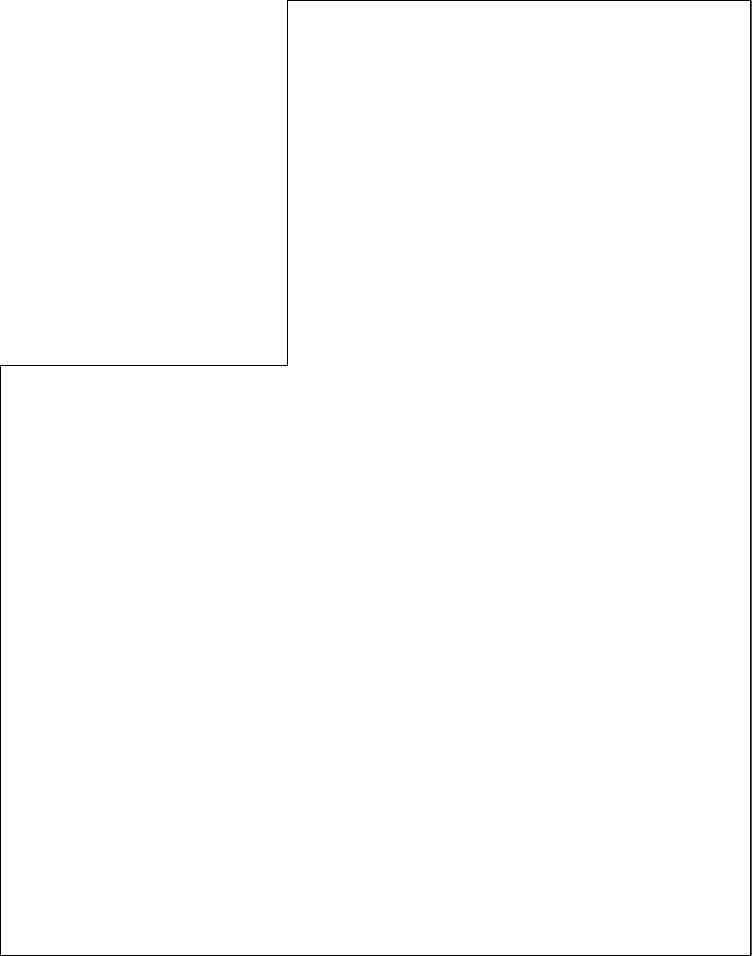}
$\longrightarrow$
\includegraphics[scale=0.1]{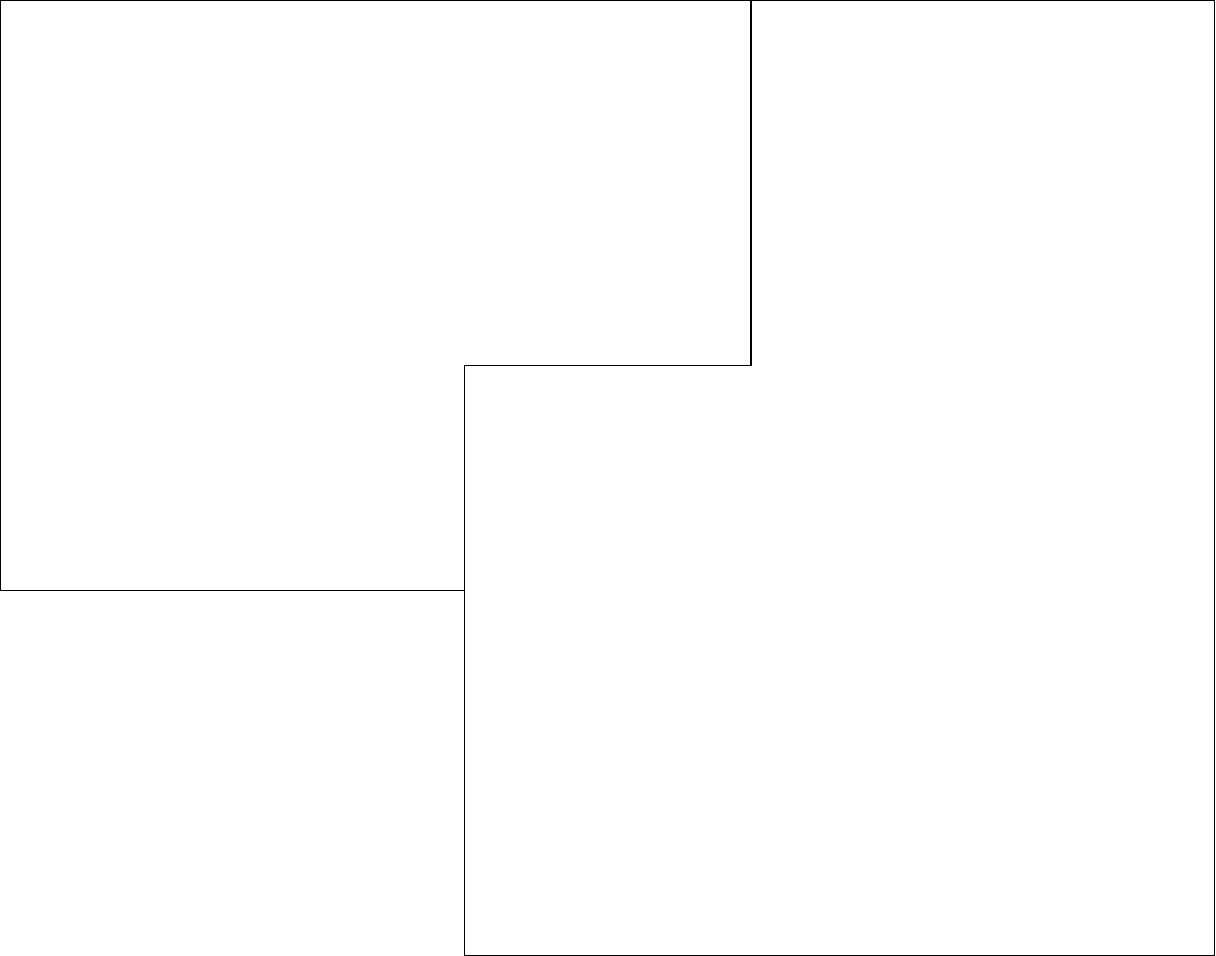}$\longrightarrow$}
\includegraphics[scale=0.1]{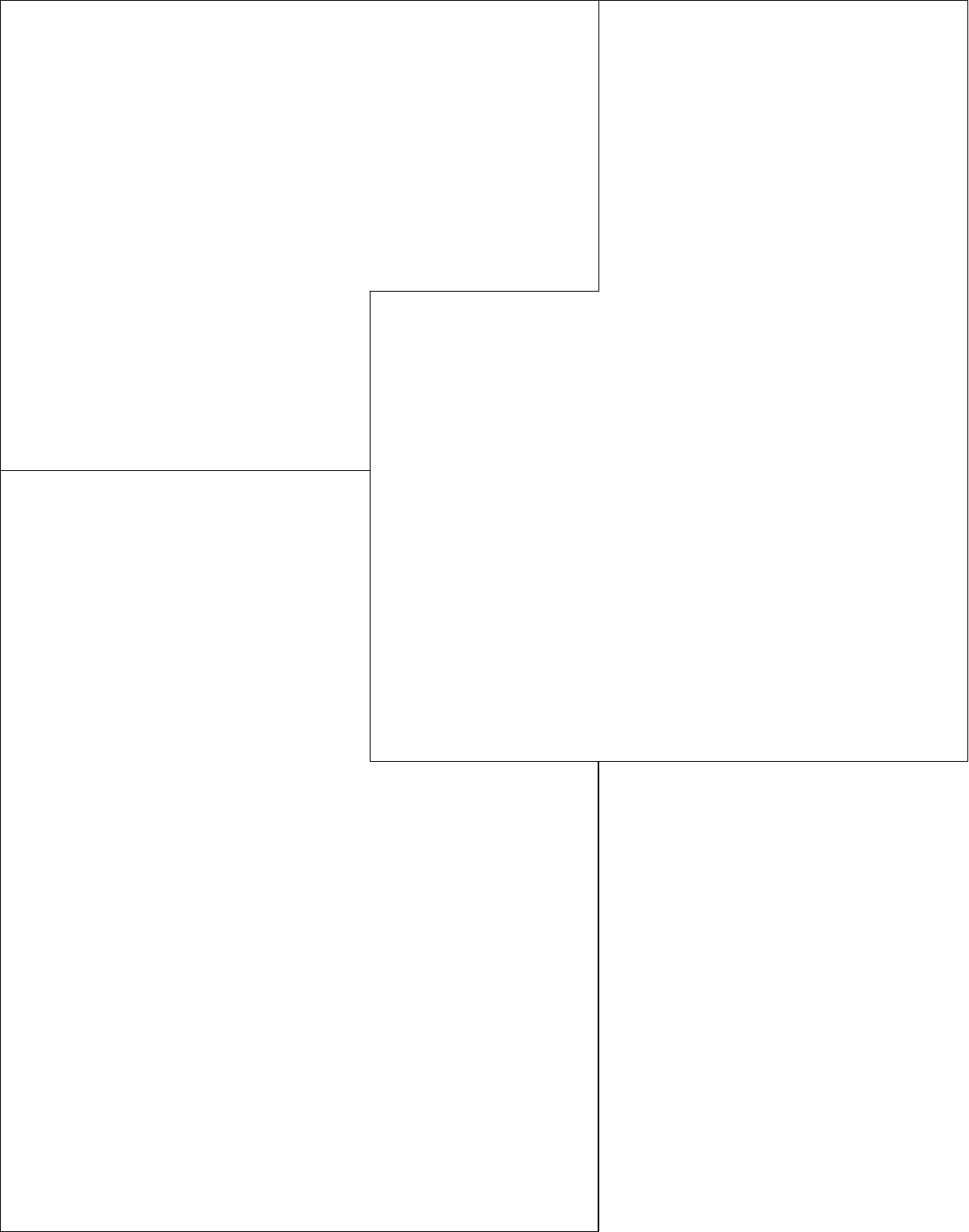}\raisebox{1cm}{$\longrightarrow$}
\includegraphics[scale=0.1]{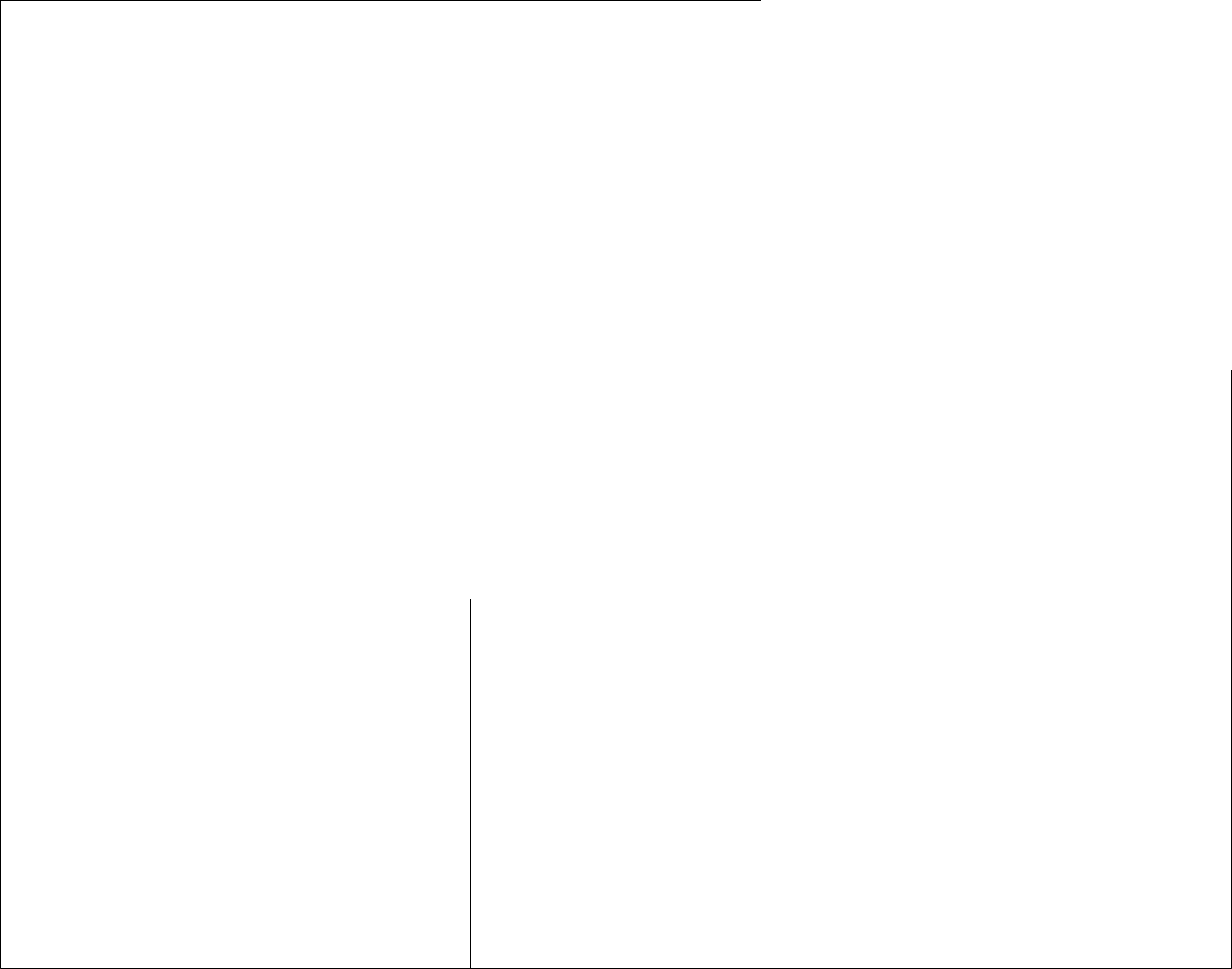}
        \end{center}
\caption{\small An infinite supertile is a union of a chain
of supertiles.}\label{supertile}
\end{figure}

Our main Theorem~\ref{description} states that every A2 tiling of
the plane is 
\begin{itemize}
\item either an infinite supertile, 
\item or a union of 2 infinite 
  supertiles $S_1,S_2$,  which both tile half-planes
  obtained by cutting the plane by a line $l$; moreover,
$S_1,S_2$ are reflections of each other in the axis $l$ (see Fig.~\ref{f75}), 
\item or a union of 4 infinite
supertiles $S_3,S_4,S_5,S_6$,  which all tile quadrants
obtained by cutting the plane by two orthogonal lines 
$l_1,l_2$; moreover,   
$S_3$ and $S_4$, as well as $S_5$ and $S_6$, are reflections
of each other in the axis $l_1$ and 
$S_3$ and $S_5$ ($S_4$ and $S_6$) are reflections
of each other in the axis $l_2$ (see Fig.~\ref{f75}).
\end{itemize}
\begin{figure}[t]
\begin{center}
\includegraphics[scale=.8]{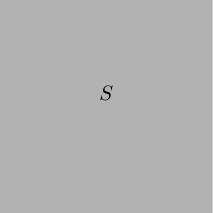} 
\hskip 1cm \includegraphics[scale=.8]{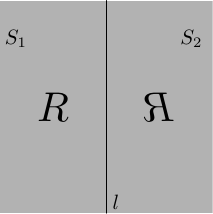}
\hskip 1cm \includegraphics[scale=.8]{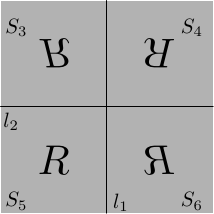}
        \end{center}
 \caption{\small Three different types of A2 tilings of the plane.
Here $S,S_1,S_2,S_3,S_4,S_5,S_6$ are 
infinite supertiles. The supertiles $S_1$ and $S_2$ are reflections
of each other in the axis $l$. 
The supertiles $S_3$ and $S_4$ ($S_5$ and $S_6$) are reflections
of each other in the axis $l_1$.
The supertiles $S_3$ and $S_5$ ($S_4$ and $S_6$) are reflections
of each other in the axis $l_2$.}\label{f75}
\end{figure}

Following~\cite S, 
we then consider the family of self-similar 
tilings\footnote{\emph{substitution tilings} in the terminology of~\cite{G}}
associated with our substitution. 
A tiling $T$ is called \emph{self-similar} 
if any its finite pattern can be found in a supertile.
It is not hard to show by induction that every 
supertile satisfies the Arrow rule
and hence every self-similar tiling
is an A2 tiling.
Our second result states that the converse 
implication holds as well: every A2 tiling of the plane is self-similar
(Theorem~\ref{th34}). This result follows from our first result  
on the structure of A2 tilings of the plane.

Finally, we answer the following question
about ``patching holes'' in A2 tilings.
Assume that a $d$-tiling $T$ of the plane satisfies
the Arrow rule everywhere except for a
bounded region; is there an A2 $d$-tiling
$T'$ of the plane such that the symmetric difference
of $T$ and $T'$ is finite?
We show in Theorem~\ref{th-ray} that this is not the case.


The paper is organized as follows.
In the next section we provide the main definitions.
In Section~\ref{sr} we state our results.
In Section~\ref{sp} we prove all theorems.
The proofs of propositions and lemmas are deferred to Appendix.

\section{Definitions}
\label{sd}

The notation $A\sqcup B$ refers
to the disjoint union of $A$ and $B$ and 
$A\subset B$ means that $A$ is a subset of $B$
(not necessarily a proper subset).

\begin{definition}
A \emph{tile of size $d$} 
is a Golden Bee of size $d$.
We call tiles of size $d$ \emph{large $d$-tiles}
and tiles of size $\psi d$ \emph{small $d$-tiles}.
A \emph{tile} is a $d$-tile for some $d$. 
If a $d$-tile $H$ is cut into small and large $d\psi$-tiles, $F$ and
$G$,
as shown below,
      \begin{center}
\includegraphics[scale=0.7]{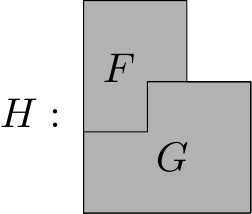}
        \end{center}
then we call $G$  and $F$
the  \emph{son} and the \emph{daughter}
of $H$, respectively,
we call $F$ a \emph{sister} of $G$
and call $G$ a
\emph{brother} of $F$.
The form of Golden Bees ensures  that
each tile has the unique sister
and the unique brother.\footnote{Indeed, if a small tile $F$ and
  a large tile $G$ are located as shown on the picture,
  then $G$ can be identified by $F$, as 
the unique large tile whose angle formed by sides
of length $\psi^3$ and $\psi^6$ fills 
the cavity of $F$ in such a way that the side
of length $\psi^5$ is shared by the side of small tile
of the same length. In a similar way the tile $F$ can be identified by $G$.}
\end{definition}

\begin{definition}
\emph{A $d$-tiling} is 
a non-empty set consisting of $d$-tiles  
that are pairwise disjoint (i.e., have no common interior 
points). A \emph{tiling} is a $d$-tiling for some $d$.
We denote by $[T]$ the set tiled by a tiling $T$.
\end{definition}


\begin{definition}\label{d1}
  The operation of \emph{substitution} $\sigma$ applied to a
  $d$-tiling $T$ 
produces a $d\psi$-tiling that is obtained from $T$ by 
cutting each large $d$-tile $A\in T$ into two tiles
of sizes $d\psi$ and $d\psi^2$,
as shown on Fig.~\ref{f2} (page~\pageref{f2}), and keeping all small
$d$-tiles intact.\footnote{It is more common 
in the literature to inflate
the initial tiling by  $1/\psi$ before substitution
so that the resulting tiling is again a $d$-tiling.}
The resulting tiling is denoted by $\sigma T$
and is called the \emph{decomposition} of $T$.
\end{definition}

Since 
each tile has the unique brother, substitution is an injective
operation. Indeed, if $\sigma T=T'$ for a $d$-tiling $T$,
then $T$ must consist of all $d$-tiles of the form
$(F\cup\text{the brother of }F)$,
where $F$ is a small $d\psi$-tile from $T'$, and 
of all large $d\psi$-tiles $G\in T'$ whose
sister is not in $T'$.

\begin{definition}
The inverse  operation $\sigma^{-1}$ is called  \emph{composition}.
\end{definition}

This operation is not total, that is, some tilings have no compositions.
For instance, if a $d\psi$-tiling $T'$ consists of a single small  
$d\psi$-tile, then  there is no $d$-tiling $T$
with $\sigma T=T'$.  
\begin{definition}
If $\sigma^{-1}(T)$ is defined, we say that $T$ 
is \emph{composable}.
If $\sigma^{-n}(T)$ is defined for all natural numbers $n$, 
we say that $T$ 
is \emph{infinitely composable}.
\end{definition}

\begin{definition}
A \emph{$d$-supertile of level $n\ge0$} is the $d$-tiling 
$\sigma^n(\{H\})$ obtained by applying
$n$ substitutions to the initial $d/\psi^n$-tiling $\{H\}$
consisting of the single large $d/\psi^n$-tile $H$.
(A supertile of level 8 is shown
in Fig.~\ref{f1} on page~\pageref{f1}.)
We will use also the notation $\sss_d(H)$ for $\sigma^n(\{H\})$
to indicate the size of tiles in $\sigma^n(\{H\})$.
A \emph{$d$-supertile of level $-1$} is the $d$-tiling 
consisting of the single small $d$-tile.
\end{definition}

It follows from the definition that every
supertile of level $n\ge0$ is composable
and its  composition is
a supertile of level $n-1$.
Every supertile of level $n\ge 1$ is a disjoint
union of a 
supertile of level $n-1$ and a supertile of level $n-2$, see Fig.~\ref{f39}.
\begin{figure}[t]
        \begin{center}
\includegraphics[scale=0.25]{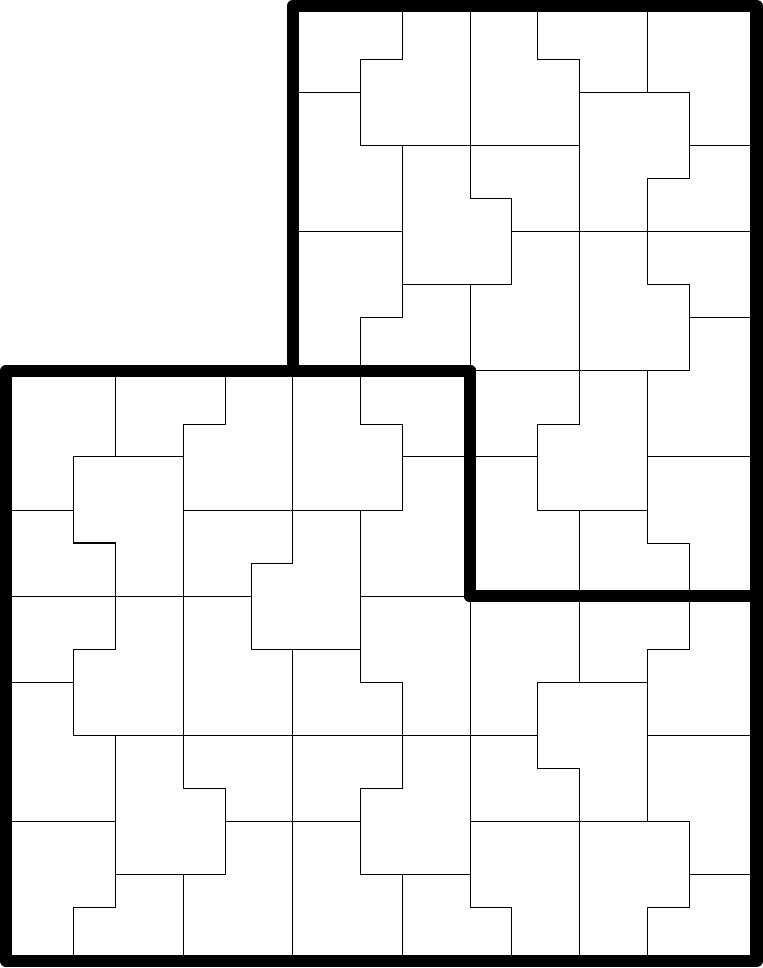}
        \end{center}
\caption{\small The supertile of level 8 
is represented as a union of supertiles of levels 7 and 6.}\label{f39}
\end{figure}

\begin{definition}
A $d$-tiling is called an \emph{infinite $d$-supertile}
if it is a union (= the limit) of an infinite chain of 
$d$-supertiles $T_0\subset T_1\subset T_2\subset \dots$
such that for all $n$ the tile
$[T_n]$ is either the son, or the daughter of the
tile $[T_{n+1}]$ (see Fig.~\ref{supertile} on page~\pageref{supertile}.)
\end{definition}

It is not hard to see that every infinite supertile is infinitely
composable.

\begin{definition}
Tilings $T$ and $S$ are called \emph{congruent}
if there is an isometry $f$ of the plane such that
$T=\{f(H)\mid H\in S\}$.
\end{definition}


\section{Results}\label{sr}

Our goal is two-fold: we want to understand how
A2 tilings of the plane may look like and, using that understanding,
to prove some their properties. 
It turns out that our technique works for tilings of 
any convex set, therefore we state our theorem for tiling of arbitrary 
convex sets (actually, we will see that, among convex sets, 
A2 tilings can tile only a plane, a half-plane or a  
quadrant).

The next proposition establishes some relations between
the notions of a supertile, an A2 tiling and an
infinitely composable tiling.

\begin{proposition}[\cite{GS,ammann}]\label{standard-correct}
(a) Every (finite or infinite) supertile is an A2 tiling.
(b) Each A2  tiling of a convex set is composable.
(c) The composition of every  A2  tiling 
of a convex set is again an A2 tiling (hence every
A2  tiling of a convex set is infinitely composable). 
\end{proposition}
 
For the sake of completeness we present a proof
of this proposition in Appendix.


\subsection{The structure 
  of A2 tilings of convex sets}

The structure 
of A2 tilings of convex sets
is established in Theorems~\ref{description1}
and~\ref{description} below.
The first theorem
applies to all  infinitely composable
tilings. The second one applies only to A2 tilings. 
Both theorems express possible structures of tilings in terms 
of infinite supertiles. Thus it is useful to
understand how infinite supertiles may look
like. Therefore we start with a
description of infinite supertiles.

Recall that an infinite $d$-supertile is a union of a chain
of $d$-supertiles 
$$
T_0\subset T_1\subset T_2\subset \dots.
$$
such that $[T_n]$ is either the son, or the daughter 
of $[T_{n+1}]$ for all $n$.
W.l.o.g. we may assume that  the supertile $T_0$ consists of a
single tile. 
In this case we will call the 
sequence of tiles 
$[T_0],[T_1],[T_2],\dots$
a \emph{representation}
of the infinite $d$-supertile $\bigcup_{n=0}^\infty T_n$.
This definition can be applied to finite supertiles as well,
in which case the sequence is finite.
It is not hard to see that for every 
sequence of tiles $H_0,H_1,H_2, \dots$ such 
that $H_n$ is the son or the daughter of  $H_{n+1}$ (for all $n$)
there is a unique infinite supertile with representation
 $H_0,H_1,H_2, \dots$.

A supertile can have many representations.
More specifically the following proposition holds.
\begin{proposition}\label{l8}
(a) Assume that $T$ is 
an infinite
$d$-supertile and $H$ is any its tile. 
Then there is a unique 
representation $H_0,H_1,H_2,\dots$ of $T$
with $H_0=H$. 
(b) For any two representations
$H_0,H_1,H_2,\dots$ and 
$G_0,G_1,G_2,\dots$ of an infinite supertile
$T$ there are $n,m$
such that  
$H_{i+n}=G_{i+m}$ for all $i\ge0$
(the representations have common tail).
(c)
If a tiling $T$ is infinitely composable 
and $H$ is any its tile, then there is a unique infinite supertile
$S$ with $H\in S\subset T$.
\end{proposition}

\begin{corollary}\label{c2}
  Any infinite supertile $T$ has only trivial
  symmetry (if $f$ is an isometry such that $f(T)=T$, then
  $f$ is the identity mapping).
\end{corollary}  
\begin{proof}
  Let $H$ be any tile from $T$ and let  $H_0,H_1,H_2,\dots$ 
be the unique 
representation of $T$
with $H_0=H$. Then $f(H_0),f(H_1),f(H_2),\dots$
is a representation of $f(T)=T$.
By Proposition~\ref{l8}(b) we have
$H_n=f(H_m)$ for some $m,n$. As $f$ does not change
the size of tiles, we must have $m=n$ and hence $H_n=f(H_n)$.
Since the Golden Bee has only trivial symmetry,
$f$ is the identity mapping.
\end{proof}  


Every representation $H_0,H_1,H_2,\dots$
of an infinite $d$-supertile $T$ is  completely specified 
by
the initial tile $H_0$ and 
the infinite sequence $\alpha$ of letters $s,l$
where $\alpha_n=s$ if $H_n$ is the daughter 
of $H_{n+1}$ and  $\alpha_n=l$ if $H_n$ is the son  
of $H_{n+1}$ ($s,l$ stand for ``small'' and ``large'').
The pair ($H_0$, $\alpha$) will be called a \emph{succinct
representation} of the infinite $d$-supertile $T$.
Now we formulate a simple criterion
of whether two 
infinite supertiles are congruent.
\begin{definition}
Define \emph{the weighted length} of a sequence $u$ of letters $s,l$  
by the formula: 
$$
w(u)= 2\text{(the number of $s$'s in }u)+ \text{(the number of $l$'s in }u).
$$ 
Infinite sequences $\aaa$ and $\beta$ of letters $s,l$
are \emph{equivalent} 
iff $\aaa$ and $\beta$ can be represented
as concatenations
$\aaa=u\gamma$ and $\beta=v\gamma$ for  some finite sequences
$u,v$
with   $w(u)=w(v)$. 
\end{definition}

The weighted length has the
following meaning: if $(H, u)$
is a succinct representation of a finite $d$-supertile
$S$, then the level of $S$ is equal to $w(u)$, if
$H$ is a large $d$-tile, and to $w(u)-1$
otherwise (see Fig.~\ref{f44}). 
\begin{figure}[t]
        \begin{center}
\includegraphics[scale=0.3]{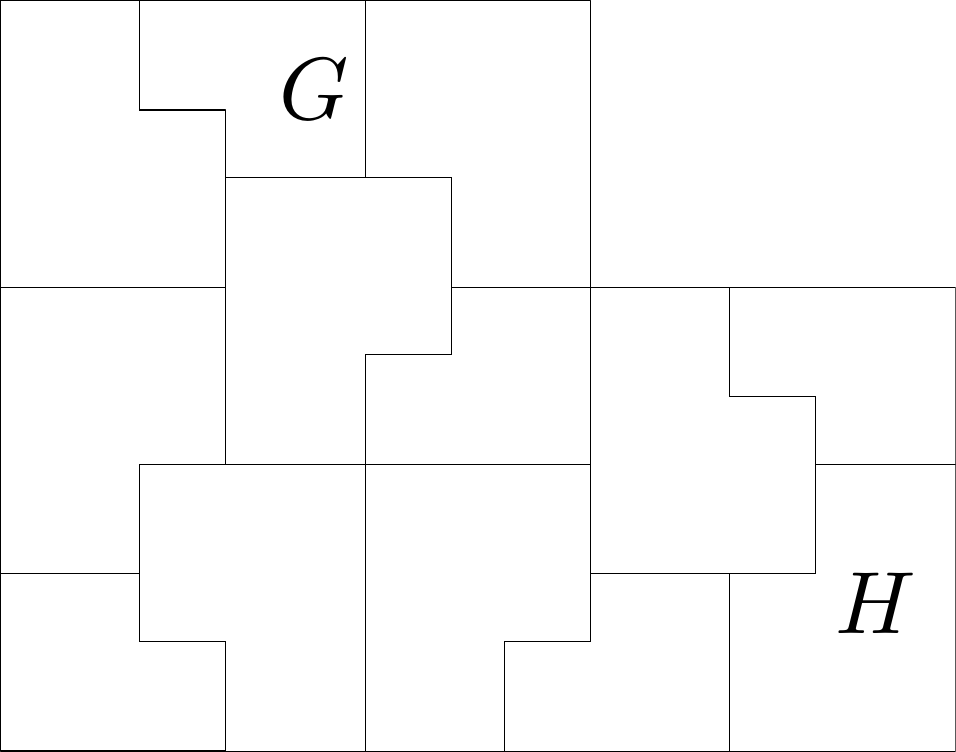}
        \end{center}
\caption{\small The supertile of level 5
has succinct representations
$(H, sls)$
and  $(G, ssll)$, where 
$w(sls)=2+1+2=5$ and $w(ssll)=2+2+1+1=6$.}\label{f44}
\end{figure}

\begin{proposition}\label{th-id}
Assume that $H$ and $G$ are large $d$-tiles.
Infinite $d$-supertiles with succinct representations $(H,\alpha)$  and 
$(G,\beta)$
are congruent iff $\aaa$ and $\beta$ are equivalent.
\end{proposition}

This proposition can be easily generalized to the case when
$H$ and $G$ are small tiles, or a large tile and a small tile.

The next theorem explains which part of the plane tiles an
infinite supertile with succinct representation $(H,\alpha)$:
\begin{theorem}\label{standard}
(a) An infinite supertile with succinct representation $(H,\alpha)$  
does not tile the entire plane iff 
a tail of $\aaa$ consists of the blocks
$s$ and $lsl$. (b) In this case (when $S$ does not tile the plane) 
it tiles a half-plane or a quadrant; 
more specifically, it tiles a quadrant 
iff a tail of $\aaa$ consists of alternating 
blocks 
$s$--$lsl$--$s$--$lsl$--\ldots.
\end{theorem}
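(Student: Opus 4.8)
The plan is to analyze the geometry of how the hexagon $A_n$ sits inside $A_{n+1}$, and to track which portion of the plane is ``covered from below'' as we pass to the union $\bigcup_n T_n$. The key observation is that when $A_{n+1}$ is the father of $A_n$ (operation $l$), the hexagon $A_n$ occupies the ``daughter'' position inside $A_{n+1}$, and the complementary sister $S_n$ (a small hexagon) lies on a definite side; when $A_{n+1}$ is the mother of $A_n$ (operation $s$), $A_n$ is the small ``son'' sitting in a corner of $A_{n+1}$, with the brother (a large hexagon) filling the rest. In both cases $A_n$ meets the boundary $\partial A_{n+1}$ along some of its sides and is separated from the rest of $\partial A_{n+1}$ by the sister/brother. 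The tiling $\{H\}\alpha$ tiles all of the plane iff every point of the plane eventually lies in the interior of some $A_n$, equivalently iff the ``frontier'' sides of $A_n$ (those sides of $A_n$ that lie on $\partial A_{n+1}$, hence never get covered by any later sister/brother hexagon on that side) recede to infinity in all directions.

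First I would set up the bookkeeping: for each $n$, identify precisely which of the six sides of $A_n$ are ``alive'', i.e. remain on the boundary of $A_m$ for all $m>n$ versus which become interior (engulfed by a sister or brother at some later step). Because $A_n$ has bounded shape (a fixed similarity type), the relevant data is finite: it is determined by the position of $A_n$ inside $A_{n+1}$, which is determined by the operation $\alpha_n$ together with the shape of $A_n$ (large or small — but note $A_n$ is large for $n\ge 1$ after the normalization, and small only possibly for $n=0$). So the evolution of ``which sides are alive'' is governed by a finite automaton reading $\alpha$. I would draw the handful of cases from Fig.~\ref{f2}/Fig.~\ref{orient} and record: after operation $l$, the long side of $A_n$ facing the sister is now dead, the others survive (suitably re-identified as sides of $A_{n+1}$); after operation $s$, the two sides of the small son facing the brother die. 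The plane is tiled iff, along $\alpha$, these deaths occur ``often enough in every direction'' that the live portion of $\partial A_n$ escapes to infinity; the plane is \emph{not} tiled iff some side stays alive forever, i.e. there is a fixed ray/half-line $\ell$ in the plane such that $A_n$ lies on one side of a line through $\ell$ for all $n$ — and one checks this line is the same (up to the scaling by $\psi^{-1}$ at each coarsening) precisely when the tail of $\alpha$ avoids the patterns that would flip $A_{n}$ across that line.

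The crux is the combinatorial characterization: I would show that a given side stays alive forever along a tail of $\alpha$ exactly when that tail is built from the blocks $s$ and $lsl$. The ``if'' direction is the easier half: starting from the hexagon shape and checking, by the case analysis above, that the blocks $s$ and $lsl$ each preserve a particular ``alive long side'' (and one verifies these are the only single-letter/three-letter moves that do so, i.e. $ll$ and $ls$-not-followed-by-$l$ kill it), so any concatenation of them keeps a side alive and the tiling omits a half-plane. For the ``only if'' direction, I would argue contrapositively: if the tail of $\alpha$ is \emph{not} a concatenation of blocks $s$ and $lsl$, then it must contain either $ll$ somewhere, or a maximal run $l s^k l \cdots$ with the wrong parity — in any case one can locate, infinitely often, a sub-step after which the previously-surviving side gets engulfed, and moreover the engulfing happens alternately from ``both sides'' so that the live region grows without bound in every direction, hence the union is the whole plane. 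I expect this ``only if'' direction — precisely, showing that failing the block condition forces coverage in \emph{all} directions, not just that one particular side dies — to be the main obstacle, and it is where one must be careful to track not a single side but the whole cyclic arrangement of alive sides of $A_n$ and show the alive arc shrinks to nothing.

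For part~(b), once~(a) is in hand the dichotomy half-plane versus quadrant falls out of the same automaton: if the tail is a concatenation of $s$ and $lsl$ blocks, either exactly one ``direction'' of $\partial A_n$ stays alive forever (giving a half-plane) or two mutually transverse directions do (giving a quadrant). I would show that two transverse alive sides persist exactly when the blocks $s$ and $lsl$ strictly alternate: each block $lsl$ ``advances'' the alive side by the large step and each lone $s$ by the small step, and it is precisely the alternation $s$–$lsl$–$s$–$lsl$–$\cdots$ that keeps \emph{both} of the two corner-sides of the relevant right angle alive (any two consecutive equal blocks would kill one of them). This is again a finite check on the shape of the Ammann hexagon, and combined with Corollary~\ref{cor} (aperiodicity) and the already-established fact that infinite standard tilings are Ammann, it shows the tiled region is genuinely a quadrant (a proper $90^\circ$ wedge) rather than anything larger or smaller.
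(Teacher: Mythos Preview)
Your approach is essentially the same as the paper's --- track, via a finite-state analysis, which side of $A_n$ remains on the boundary of the union --- but your execution of the ``only if'' direction of (a) has a genuine gap. You claim that if the tail of $\alpha$ is not a concatenation of the blocks $s$ and $lsl$, then it must contain $ll$ (or some parity defect), and that $ll$ ``kills'' the alive side. But $ll$ \emph{can} occur in perfectly good concatenations of $s$ and $lsl$: for instance $lsl\cdot lsl = lsllsl$. So the presence of $ll$ does not force the plane to be covered, and your proposed case split does not separate the two regimes. More fundamentally, you are trying to prove directly that ``bad tail $\Rightarrow$ plane is covered'', which requires showing growth in \emph{every} direction, and you correctly flag this as the main obstacle without resolving it.

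The paper sidesteps this difficulty by arguing the contrapositive in the other order. It first proves a short lemma: if the tiled region meets both a horizontal and a vertical line, it contains their intersection point. Hence if $\{H\}\alpha$ does not tile the plane, it lies in a half-plane bounded by some axis-parallel line $\ell$, and for all large $n$ one side of $A_n$ lies on $\ell$. Now one simply tabulates, for each of the four named sides (back, front, top, bottom) of $A_n$, which side of $A_{n+1}$ lies on $\ell$ after applying $l$ or $s$ --- discarding transitions that would make $A_{n+1}$ cross $\ell$. The resulting transition diagram forces the ``back'' state to recur infinitely often, and between two consecutive returns to ``back'' the only possible words are $s$ and $lsl$. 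This gives the block decomposition without ever needing to characterize which sequences are \emph{not} of that form. For (b) the paper then checks directly that $sl$ maps a hexagon inscribed in a quadrant to a larger one in the same quadrant (giving the quadrant case), while two consecutive $s$'s, or two consecutive $lsl$'s, enlarge the segment on $\ell$ in both directions (giving the half-plane case). Your sketch of (b) is compatible with this, but you should reorganize (a) along these lines rather than attempting the direct contrapositive.
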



Now we present a description of  infinitely composable
tilings.

\begin{theorem}\label{description1}
  Every infinitely composable tiling
can be represented as a disjoint union
of up to four
infinite supertiles; such a representation is unique.
\end{theorem}

By Proposition~\ref{standard-correct}
every A2  tiling  of a convex set 
is infinitely composable. Therefore this theorem applies
also to arbitrary A2  tiling of convex sets.
Theorem~\ref{description1} and Theorem~\ref{standard}
imply that such a set may be either the entire plane,
or a half-plane, or a quadrant. 
Indeed, these are the only convex sets which are 
disjoint unions of quadrants, half-planes and planes.
Moreover,   A2  tiling  of convex sets
have the following important property: if such a tiling
consists of more than one supertiles than those
supertiles must be axial symmetrical.

\begin{theorem}\label{description}
If an \arrows\  tiling  tiles a convex set,  
then that set is either a plane, or a half-plane, or a quadrant.
\\
(a) Every \arrows\  tiling  of the entire plane 
is either a supertile,
or a disjoint union of  \arrows\  tilings of half-planes; 
in the second case those tilings
of half-planes are axial symmetrical in the line that  
separates the half-planes.\\
(b) Every \arrows\  tiling of
a half-plane is either an infinite supertile, or  
a disjoint union of  \arrows\  tilings
of quadrants; in the second case those tilings
of quadrants     
are axial symmetrical in the line that  separates the quadrants.\\
(c) For every $d$ there are three different $d$-tilings
of a quadrant (up to congruence) and they all are infinite supertiles.
\end{theorem}


\begin{remark}
By this theorem every \arrows\  tiling $T$ of the 
plane has one and only one of the forms shown on Fig.~\ref{f75}
on page~\pageref{f75}. That form can be 
determined by any succinct representation of any infinite supertile
$S\subset T$. 
Thus there is a natural 1-1 correspondence between
\arrows\  tilings of the plane (we identify here 
congruent tilings) and equivalence classes of
infinite $l$-$s$-sequences.
\end{remark}


\subsection{\arrows\ tilings = self-similar tilings}

Now we proceed to our second result, which shows that every 
\arrows\ tiling of a convex set is self-similar.

\begin{definition}
A \emph{pattern} is a finite tiling. A pattern is \emph{legal} 
if it is a subset of a supertile.
A tiling $T$ is called \emph{self-similar} (with respect 
to the substitution  shown on
Fig.~\ref{f58} on page~\pageref{f58}) if all its finite subsets are legal.  
\end{definition}

By Proposition~\ref{standard-correct} 
every supertile is an \arrows\ tiling. 
Hence every self-similar tiling is 
an \arrows\ tiling.
Thus we have the following inclusions for tilings of convex sets:
$T$ is a self-similar tiling $\Rightarrow$
$T$ is an \arrows\ tiling $\Rightarrow$
$T$ is infinitely composable.
The second implication is not invertible 
(see Example~\ref{ex1} on page~\pageref{ex1}).
Our second result states that the first implication is.

\begin{theorem}\label{th34}
  Every
  \arrows\ tiling of a convex set is self-similar.
\end{theorem}
This theorem is not straightforward, as one might think.
We derive this theorem from Theorem~\ref{description}. One could try
to prove it directly. We outline a sketch of such proof
and point out the problems we face. 
\begin{proof}[Sketch of proof]
  Let $T$ be a given \arrows\ tiling of a convex set.
  We want to show that
it is self-similar. By Proposition~\ref{standard-correct}
the tiling $T$ is infinitely  composable.
We first
represent $T$ as a disjoint union of infinite supertiles.
To this end we apply compositions to $T$ and consider 
for each $n$ the tiling  $\sigma^{-n}T$. 

Pick any  tile $H$ from $T$. For every $n$, the tile $H$
is covered by a tile  $H_n$  from $\sigma^{-n}T$. Consider the 
tiling $S_n$ that consists of all tiles from $T$ that 
are covered by $H_n$. Then $S_n$ is a supertile.
The supertiles $S_n$ form a chain
$S_0=\{H\}\subset S_1 \subset \dots\subset S_n\subset\dots $
and their union is an infinite supertile. 
As all supertiles $S_n$ are 
self-similar, so is their union $S=\bigcup_{n=0}^\infty S_n$. 
If it happens that $S=T$, then we are done. 

Otherwise, if $S\ne T$, 
starting from any tile $B$ from the difference $T\setminus S$,
we can find a new  infinite supertile 
$S'\subset T$ which contains $B$. 
It is not hard to see that $S'$ and $S$ are disjoint
(indeed, if they shared a tile $C$, then both supertiles
$S,S'$ could be constructed
starting from $C$ as well and hence $S$ and $S'$ would coincide). 
In this way we can represent the given tiling
in the form $T=S_1\sqcup S_2\sqcup \dots$, where $S_1,S_2,\dots$ 
are infinite supertiles.

Now we face the following problem. 
We have to show that every pattern $W\subset T$
is legal. It may happen that $W$ intersects different $S_i$'s
from the representation $T=S_1\sqcup S_2\sqcup\dots$. 
In this case self-similarity of $S_i$'s does not imply that 
$W$ is legal.
We solve this problem as follows. 

Up to now we have only used infinite composability 
of $T$. As we have said,
there is an infinitely composable tiling of the plane 
which is not self-similar 
(Example~\ref{ex1} on page~\pageref{ex1}). 
So we have to use our assumption that $T$ satisfies
the Arrow rule. 
Here Theorem~\ref{description} comes into play.
By that theorem
the tilings $S_i$ are mirror images of each other as on Fig.~\ref{f75}
(page~\pageref{f75}).
Moreover, it follows
from the proof of Theorem~\ref{description}
that in the case  $T=S_1\sqcup S_2$ the pattern $W$
is covered by  two symmetrical supertiles $A_1,A_2$
\begin{figure}[t]
\begin{center}
\includegraphics[scale=1]{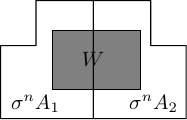} \hskip 2cm
\includegraphics[scale=1]{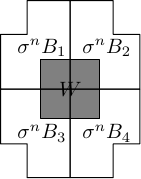}
        \end{center}
\caption{\small On the left: the pattern $W$
is covered by two symmetrical 
supertiles $A_1,A_2$. On the right:
the pattern $W$ is covered by four symmetrical
supertiles $B_1,B_2,B_3,B_4$.}\label{f25}
\end{figure}
and in the case  $T=S_1\sqcup S_2\sqcup S_3\sqcup S_4$
the pattern $W$ is covered by  
four symmetrical supertiles $B_1,B_2,B_3,B_4$,
as shown on Fig.~\ref{f25}.
It remains to note that both patterns
``two large tiles sharing their backs
so that they are reflections of each other
(as in  Fig.~\ref{f25} on the left)'' and
``four large tiles sharing their backs and bottoms
so that they are reflections of each other
(as in  Fig.~\ref{f25} on the right)''
are legal.
Indeed, they 
appear in the supertile of level 8 shown
on Fig.~\ref{f1} (page~\pageref{f1}).
Hence applying to the supertile of level 8 the appropriate 
number of substitutions we get a supertile that includes 
 $W$.
\end{proof}

\begin{remark}
As the family of self-similar tilings coincide 
with the family of \arrows\ tilings, Theorem~\ref{description}
applies to self-similar tilings as well. However, the direct proof
of Theorem~\ref{description} for self-similar tilings is only a little
bit easier than ours (namely, 
the proof of Proposition~\ref{standard-correct} is a bit simpler
for self-similar tilings).
\end{remark}

\subsection{Non-robustness of \arrows\ tilings}

Our third result states that \arrows\ tilings are sensitive to errors
in the following sense.
Durand, Romashchenko and Shen~\cite{brs}, who
considered tilings of the plane by square tiles,
defined the following notion of robust families of tilings.
Assume that we have a set $\tau$ of tiles,
where each tile is a square of size $1\times 1$
with colored edges. Consider the family $\T$ consisting
of tilings
of parts of plane in which tiles can be attached only
side-by-side so that the colors match.
The family $\T$ is called \emph{robust} if for any large enough
tiling of a set with a hole one can ``patch the hole'',
that is, one can find a tiling of the same set plus the hole
which differs from the original tiling not much.

\begin{definition}
Let $c_1<c_2$ be positive integers.
We say that a family of tilings $\T$ is \emph{$(c_1,c_2)$-robust}
if the following holds:  For
every positive natural $\Delta$ 
and for every tiling $T\in\T$ that tiles
a set
$$
S\supset ([-c_2\Delta,c_2\Delta]\times[-c_2\Delta,c_2\Delta])\setminus
([-\Delta,\Delta]\times[-\Delta,\Delta])
$$ 
there exists a tiling
$T'\in \T$ of the set $S\cup([-\Delta,\Delta]\times[-\Delta,\Delta])$ 
that contains all tiles from 
$T$ lying outside 
of the square $[-c_1\Delta,c_1\Delta]\times[-c_1\Delta,c_1\Delta]$.
\end{definition}

The smaller $c_1,c_2$ are the stronger this definition is.
Durand, Romashchenko and Shen~\cite{brs}
exhibited a family of non-periodic tilings
with many interesting properties
that is $(c_1,c_2)$-robust for some $c_1,c_2$.

The notion of a robust family naturally generalizes
to tilings by arbitrary tiles (of any shape)
defined by arbitrary local rules (like, say, the Arrow rule).
In this paper, we show that the family 
of \arrows\ tilings is 
\emph{not} $(c_1,c_2)$-robust for all $c_1,c_2$.
Moreover, the following is true:

\begin{theorem}\label{th-ray}
  There is a tiling $T$ of the
  plane that satisfies the Arrow rule everywhere except
  a bounded region and that has the following property:
  for any \arrows\ tiling $T'$ of the plane the difference
  $T\setminus T'$ is infinite.
\end{theorem}
\begin{corollary}
The family 
of \arrows\ tilings is 
\emph{not} $(c_1,c_2)$-robust for all $c_1,c_2$. 
\end{corollary}
\begin{proof}[Proof of the corollary]
  Let $T$ be the tiling from the theorem and $c_1,c_2$ arbitrary natural
  numbers.
Remove from $T$ all tiles violating the Arrow rule.
We obtain an \arrows\ tiling
of a set $S$, which is equal to the plane minus a bounded hole $H$.
Let $\Delta$ be equal to the diameter of the hole $H$ and hence
$S$ includes $[-c_2\Delta,c_2\Delta]^2\setminus
[-\Delta,\Delta]^2$. Assume now that an \arrows\ tiling $T'$
tiles the set $S\cup [-\Delta,\Delta]^2$,
that is, the entire plane.
We have to show that  $T'$ does not contain a tile from 
$T$ lying outside 
of the square $[-c_1\Delta,c_1\Delta]^2$.
By Theorem~\ref{th-ray}
the difference $T\setminus T'$
is infinite and hence 
at least one its tile
lies  outside that square.
\end{proof}


\section{Proofs of theorems}\label{sp}

In this section we prove all theorems. The proofs of propositions
and lemmas
are deferred to Appendix.
Several times in the proofs, 
we will apply composition to a part $S$
of a tiling $T$ and conclude that  $\sigma^{-1}T$
includes  $\sigma^{-1}S$. In general we cannot make
such conclusion, as
the following example demonstrates.
Let $S=\{G\}$ and $T=\{F,G\}$,
where $F,G$ are the daughter and the son
of a large $d/\psi$-tile $H$:
      \begin{center}
\includegraphics[scale=0.7]{pic63.pdf}
        \end{center}
Then $\sigma^{-1}S=\{G\}$ and $\sigma^{-1}T=\{H\}$.
Thus  $S\subset T$ while $\sigma^{-1}S\not\subset \sigma^{-1}T$.
However, this may happens only when $S$ contains a large tile $G$
whose cavity is not covered by $[S]$.
This makes possible for $T$ to include the sister of $G$,
in which case $G$ produces different tiles in
$\sigma^{-1}S$
and in $\sigma^{-1}T$.

A composable tiling $S$ is called \emph{proper} if the
cavity of every large tile from $S$ is covered by $[S]$.
The following tilings are proper:
\begin{itemize}
  \item Every supertile of level $n>0$ is proper.
  \item More generally,
    every tiling of the form $\sigma^2 T$ is proper,
    where $T$ is any tiling. (Indeed,
every large tile
from $\sigma^2 T$ is either a small tile from $\sigma^1 T$, in which
case its cavity is covered by its brother from $\sigma^1 T$, or
it is the brother of a small tile from $\sigma^2 T$, which covers
its cavity.)
\item Every tiling of a convex set is proper.
\end{itemize}
  
For proper tiling we have the following
\begin{lemma}\label{l0}
  (a)  If a proper tiling $S$  is a subset of a
  composable tiling $T$, then
  $\sigma^{-1}S\subset \sigma^{-1}T$.
  (b) If $T,S$  are  proper tilings
  then $\sigma^{-1}(S\cup T)=\sigma^{-1}S\cup \sigma^{-1}T$.
\end{lemma}

For reader's convenience, 
the following diagram represents the dependencies in the proofs:\\
Lemma~\ref{l0} $\Rightarrow$
Proposition~\ref{l8} $\Rightarrow$ 
Proposition~\ref{th-id}.\\
Proposition~\ref{l8} and Theorem~\ref{standard} $\Rightarrow$ 
Theorem~\ref{description1}.\\
Proposition~\ref{standard-correct}, Proposition~\ref{th-id},
Theorem~\ref{standard}, and
Theorem~\ref{description1}
$\Rightarrow$ 
Theorem~\ref{description}.\\
Proposition~\ref{standard-correct}
and
Theorem~\ref{description}
$\Rightarrow$ 
Theorem~\ref{th34}.\\
Lemma~\ref{l0}
$\Rightarrow$ Theorem~\ref{th-ray}.

\subsection{The proof of Theorem~\ref{standard}} 

Let $S$  be an infinite supertile with
succinct representation $(H_0,\alpha)$.
Consider two transformations  $s,l$  
of tiles: $s(H)$ is the unique tile whose
daughter is $H$, and $l(H)$ is the unique tile  whose
son is $H$.
Define $H_{i+1}= s(H_i)$ if
the $i$th letter of $\alpha$ is $s$
and $H_{i+1}= l(H_i)$ otherwise.
Then $H_0,H_1,H_2,\dots$
is a representation of $S$.

(a) Assume that $S$ does not tile the entire plane.
The sides of
hexagons $H_n$ stretch in two directions. Call
those directions \emph{horizontal and vertical} directions.
\begin{claima}
If the part of the plane tiled by $S$ 
intersects a vertical line and intersects
a horizontal line, then it includes the common points of the lines. 
\end{claima}
\begin{proof} 
For some $i$ both lines intersect $H_i$. 
This does not imply yet that $H_i$ contains their
common point, as it might happen that it falls into the cavity of
$H_i$. However in this case it falls into $H_{i+1}$.
\end{proof} 

By this claim every proper subset of the plane tiled by
an infinite supertile does not intersect a vertical 
or a horizontal line, call that line $L$. Then $S$ lies 
in one of the two half-planes defined by the line $L$.

Consider the distance $\delta_i$ from the tile 
$H_i$ to the line $L$.
As $H_{i+1}$ covers $H_i$, the sequence $\{\delta_i\}$ is non-increasing. 
Moreover, as $i$ is incremented by 1, 
the distance either remains the same, or 
decreases by some positive constant $\eps$ (or more).
Hence starting from some $i$ the distance does not change:
there are $\delta$ and $k$ such that $\delta_i=\delta$ for all 
$i\ge k$.

Shift the line $L$ towards the set $[S]=\bigcup_{n=0}^\infty H_n$ at the distance 
$\delta$. Now $L$  
touches all tiles $H_i$ with $i\ge k$. 
W.l.o.g. we may assume that $L$ is a horizontal line
and that all $H_i$ lie above $L$.
For  all $i\ge k$ the tile $H_i$ has a point 
on the line $L$ and hence an entire side
of the tile $H_i$ lies  on the line $L$.

We will view Golden Bees as ``chairs''
having the top, the back, the front, and the bottom (see Fig.~\ref{f15}).
\begin{figure}[t]
        \begin{center}
\includegraphics[scale=0.7]{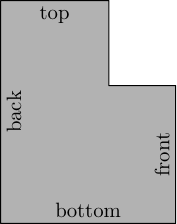}
        \end{center}
\caption{\small Then names of sides of a tile}\label{f15}
\end{figure}
The side of $H_i$ lying on the line $L$ can be either the front side,
or the back side, or the bottom side, or the top side (see Fig.~\ref{f16}).
\begin{figure}[t]
        \begin{center}\begin{center}
\includegraphics[scale=1.3]{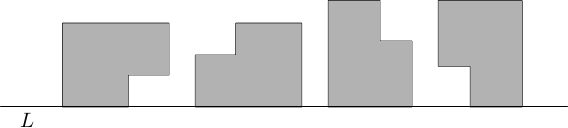}
        \end{center}
\end{center}
        \caption{\small Tiles lie
          on the horizontal line $L$
          on the front side, on the back side, on the bottom side
        and on the top side.}\label{f16}
\end{figure}
We need first to understand, how that side changes as $i$ increments. 
The following diagram shows how transformations $s$ and $l$
change the sides of tiles lying on the line $L$:
\begin{center}
\includegraphics[scale=1]{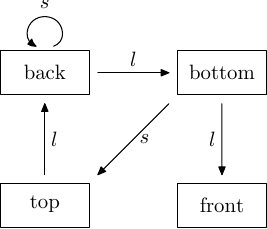}
        \end{center}
\label{diagram}
For example, the transition
$(\text{bottom}\stackrel{l}{\longrightarrow}\text{front})$
means that, if the line $L$ contains the bottom side of
a tile $H$, then
the front side of $l(H)$ lies on $L$.
        This fact is easy to verify by observing the cut in Fig.~\ref{f2}
        (page~\pageref{f2}).
If a transition is absent in this table,
then such case is impossible. For instance,
if $H$ lies on the line $L$
on its top side, then the tile $s(H)$ crosses the line $L$.

This diagram can be viewed as a finite automaton. That automaton
has the following 
property: in whatever state we start and whatever infinite sequence
of transitions we follow, we will always pass through the
``back'' state.  
Thus, for some  $i\ge k$ 
the back side of $H_i$ must  
lie on the line $L$. Moreover, there are infinitely many such 
$i$'s and between any two consecutive such $i$'s   
only the transition $s$ or the sequence of transitions 
$lsl$ may occur. This completes the `only if' part of
Theorem~\ref{standard}(a).

\begin{remark} \label{r1}
It follows from the above argument, that if an infinite supertile $S$
with representation $H_0,H_1,H_2,\dots$ 
does not tile the entire plane, then for infinitely many 
$n$ the back side of the tile
$H_n$ lies on  the border line
of the area tiled by $S$.
\end{remark}

Conversely, assume that a tail of $\aaa$ consists
of blocks $s$ and $lsl$. We have to show
that the infinite supertile $S$ with succinct representation
$(H_0,\alpha)$ does not tile the entire plane.
W.l.o.g. we may assume that $\aaa$ itself consists
of blocks $s$ and $lsl$. Consider the line passing through the 
back side of the tile $H_0$. Then all tiles $H_n$ lie in the same
half-plane as $H_0$ does. Hence the tiling $S$ tiles at most a
half-plane.

(b) Let $S$  be an infinite supertile with
succinct representation $(H_0,\alpha)$.
Assume that  
$\aaa=u\beta$ where $\beta$ consists
of the alternating blocks
$s$ and $lsl$. 
In other words, $\beta$ consists
of the alternating  
$s$ and $l$.

Let us show that $S$ tiles a quadrant. 
Assume first that $u$ is empty.
The mapping $sl$ transforms the small green (gray in the black and white
image) tile into a large tile that is inscribed
in the same quadrant.
        \begin{center}
\includegraphics[scale=0.5]{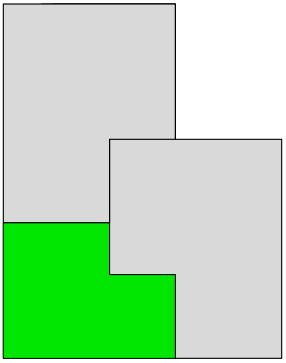}
        \end{center}
Therefore infinite number of applications of the transformation
$sl$ fills up the quadrant but not more. 
If $u$ is not empty, then 
the same arguments apply to some tile $H_n$ from the representation of $S$.

Assume now that a tail of $\aaa$ consists of the blocks
$s$ and $lsl$ but they \textsl{do not} alternate.
That is, the tail has infinitely many occurrences 
of  $ss$ or infinitely many occurrences 
of $lsllsl$.

Transformation $s$ maps a tile that lies 
on a line on its back to a larger tile 
that  also lies on the same line on its back.
The second application of $s$ increases the part  
of the tile that belongs to the line in the other direction.
        \begin{center}
\includegraphics[scale=0.5]{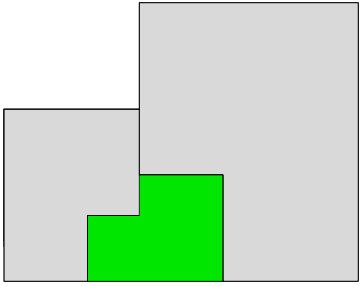}
        \end{center}
Thus if a tail of $\aaa$ has infinitely many occurrences 
of $ss$ (and consists of blocks $s$ and $lsl$)
then its application to the initial tile fills up
a half-plane.

Similar arguments apply  
when $\aaa$ has infinitely many of
occurrences of $lsllsl$. The mapping $lsl$
also maps a tile that 
is attached to a line by its back to a larger tile 
that is again attached to the same line by its back.
        \begin{center}
\includegraphics[scale=0.5]{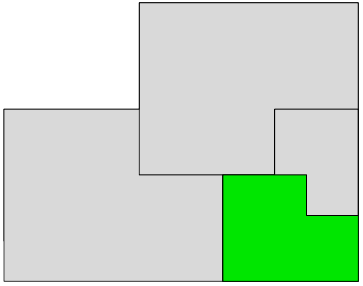}
        \end{center}
Thus the double application of $lsl$
increase the area of attachment in both directions.
Theorem~\ref{standard} is proved.

\begin{remark} \label{r2}
It follows from the above arguments, that if an infinite supertile
with representation $H_0,H_1,H_2,\dots$ tiles a quadrant, 
then for all large enough  
$n$ the back and bottom of the tile
$H_n$ lie on the boundary of that quadrant.
\end{remark}

\begin{example}\label{ex1}
Let $S$ be an infinite supertile tiling the half-plane
and let $S'$ be its reflection in the border line $l$ of the half-plane.
Shift $S'$ a little bit along $l$ (see Fig.~\ref{f53}). 
The resulting tiling is
an infinitely composable tiling of the plane and is not an \arrows\ tiling. 
\begin{figure}[t]        
\begin{center}
\includegraphics[scale=0.25,viewport=6cm 4.1cm 23cm 19.6cm,clip]{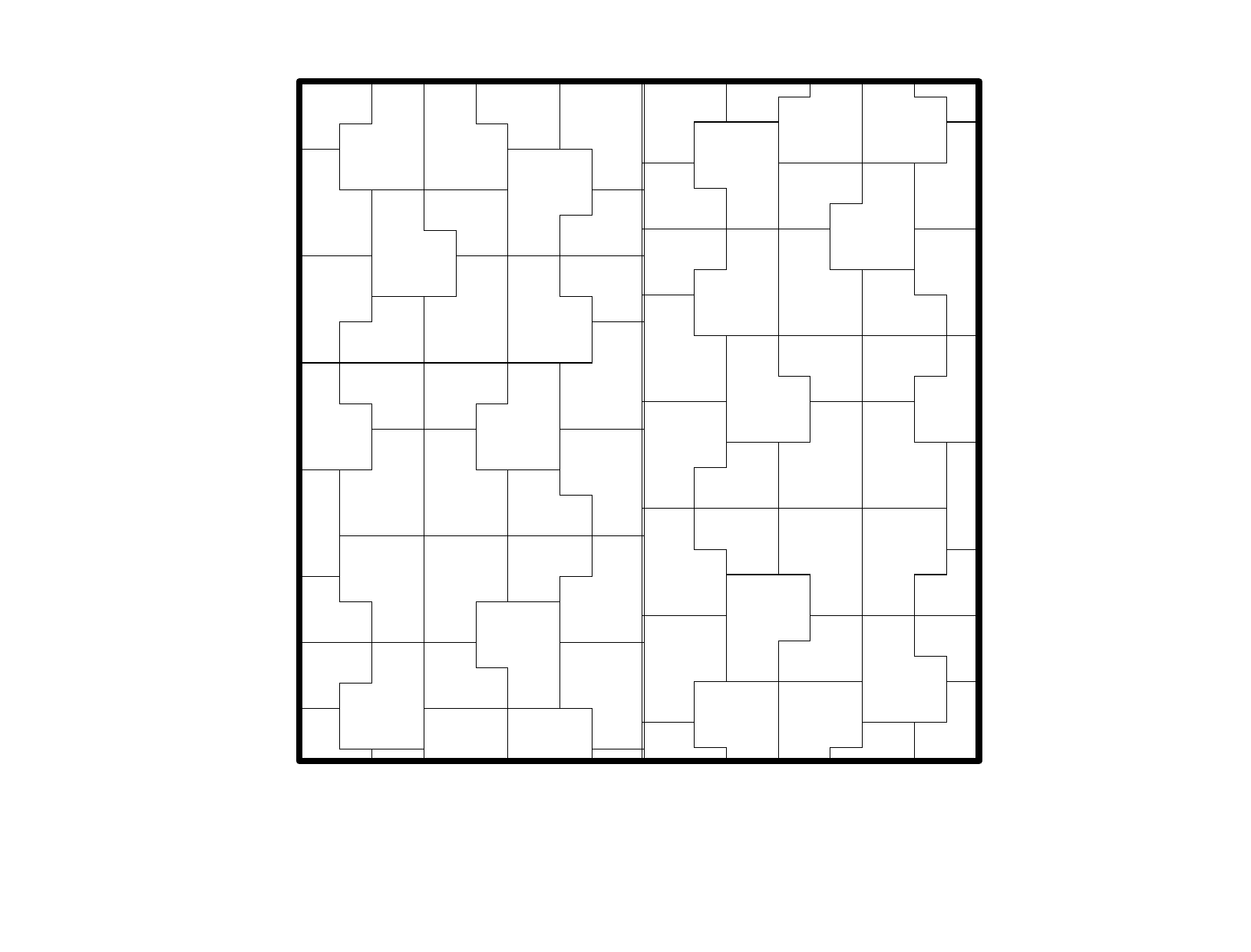}
        \end{center}
\caption{\small An infinitely composable tiling of the plane
  which is not \arrows.
}\label{f53}
\end{figure}

\end{example}

\subsection{The proof of Theorem~\ref{description1}}


Let $T$ be a given infinitely composable tiling.
We show first  
that $T$ can be represented in a unique 
way as a disjoint union of infinite supertiles.

Let $H$ be a large tile from $T$. By Proposition~\ref{l8}(c)
there is a unique infinite supertile
$S_1\subset T$ containing $H$.  
If $S_1$ coincides with $T$ then $T$ is an infinite 
supertile. Otherwise 
pick any tile $G$ in $T\setminus S_1$. Again by Proposition~\ref{l8}(c)
there is a unique infinite supertile
$S_2\subset T$ containing $G$. 
The supertiles $S_1,S_2$ are disjoint. Indeed,
assume that they share a tile $K$. 
Then we have both $K\in S_1\subset T$ and 
$K\in S_2\subset T$, which contradicts 
the uniqueness part of  Proposition~\ref{l8}(c)

As each infinite supertile covers at least a quadrant,  
in this way we can represent $T$  
as a disjoint union of up to four infinite  supertiles.  
Such representation is unique, as we have already shown that 
any two intersecting infinite supertiles that are subsets of $T$
coincide. 



\subsection{The proof of Theorem~\ref{description}}

Let $S$ be  an \arrows\  tiling  of a convex set.  
By Proposition~\ref{standard-correct} it is infinitely composable.
Thus by Theorem~\ref{description1}  
the tiling $S$ can be represented in a unique way
as a disjoint union up to four infinite supertiles,
and each of them tiles either the entire plane, or a half-plane,
or a quadrant. If a convex set is a  
disjoint unions of quadrants and half-planes, then 
it is either a plane, or a half-plane, or a quadrant.
This proves the first statement 
in  Theorem~\ref{description}.

\subsubsection{The proof of Theorem~\ref{description}(a)}

Let $S$ be  an \arrows\  tiling  of  the plane. 
As we have just seen, $S$ is either
a supertile, or a disjoint union of  \arrows\  
tilings $T,R$ of half-planes.
We have to show that in the second case   
tilings $T,R$     
are axial symmetrical in the line $l$ that  separates the half-planes.
Let  $R'$ be the reflection of $R$ in the axis $l$.
Then both  $T,R'$ tile the same half-plane and we have to show
that they coincide.

Call the set of colored oriented segments of tiles in $T$
lying on $l$  the \emph{shadow} 
of $T$  
\begin{center}
\includegraphics[scale=1]{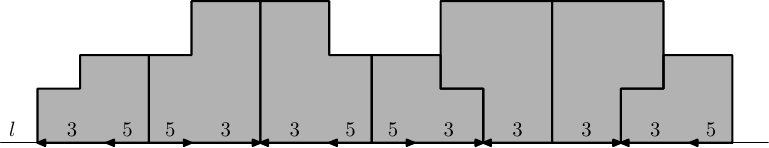}
 \end{center}
For instance, the sequence 
$$
\ltr\lfv\rfv\rtr\ltr\lfv\rfv\rtr \ltr\rtr \ltr\lfv
$$
is the shadow of the tiling from the above picture. 
Each segment in the shadow is identified by the triple
(start point, end point, color).
It suffices to prove 
the following
\begin{lemma}\label{l435}
Given the shadow of an \arrows\  tiling $T$ of 
a half-plane we can reconstruct the tiling.
\end{lemma} 
\noindent (Indeed, the shadows
of $T$ and $R'$ coincide, as $T\cup R$ is an \arrows\ tiling.
Thus by the lemma we have $T=R'$ and hence $R$ is the reflection of $T$.)

\begin{proof}[The proof of Lemma~\ref{l435}]
  Obviously
only
front, back, bottom and top sides of 
tiles can lie on border line of the half-plane tiled by $T$.
A quick look at the coloring of large and small tiles reveals
that these sides of tiles consists of 
blocks $\rsix\rfour, \rfive\rthree, \rthree,\rfour$
(see Fig.~\ref{f46}).
\begin{figure}[t]
\begin{center}
\includegraphics[scale=1]{pic15.pdf}\hskip 2cm
\includegraphics[scale=0.8]{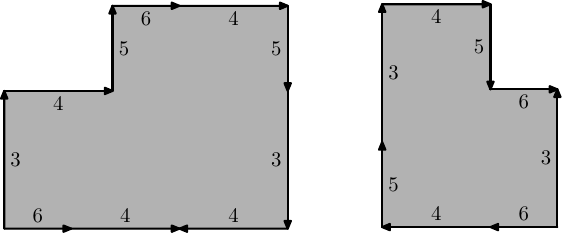}
       \end{center}
\caption{\small Coloring of large and small tiles.}\label{f46}
\end{figure}
Thus all shadows consist of these blocks.
Actually, blocks with odd numbers cannot occur
together with blocks with even numbers.

\begin{claimm}
  Every shadow either consists
  of 
  blocks $\rsix\rfour$, $\rfour$,
  or of blocks $\rfive\rthree, \rthree$.
\end{claimm}
\begin{proof}
Every tile has three sides 
that have colors $\rthree$ and $\rfive$ and
three sides 
that have colors $\rfour$ and $\rsix$. 
Call the sides of the first type \emph{odd} and
the sides of the second type \emph{even}. 
Every even side of a hexagon is parallel to every 
its even side and is orthogonal to every 
its odd side. 

In every \arrows\
tiling every two adjacent hexagons $G,H$ 
share a colored segment.
Thus they \emph{have the same orientation}:
odd sides of $G$ are parallel to odd sides of $H$
and are orthogonal to even sides of $H$.
This implies that all hexagons 
in an \arrows\ tiling of a convex set 
have the same orientation.
As all sides of the given tiling $T$
that lie on its border $l$ are parallel to each other,
either they all are even sides, or they all are odd sides. 
\end{proof}

Let us show first how to reconstruct from the shadow
all the tiles from $T$ 
that are adjacent to the border line $l$ of the half-plane.
Assume first that the shadow of  $T$ consists
of blocks $\rsix\rfour, \rfour$.

\begin{claim}\label{l22}
Given a shadow of $T$ consisting
of blocks $\rsix\rfour, \rfour$, we can 
reconstruct 
all the tiles from $T$ 
that are adjacent to the border line $l$ of the half-plane.
\end{claim}
\begin{proof}
  The given shadow can consist
  of fronts and backs of large tiles
  and tops and bottoms of small tiles
(see Fig.~\ref{f46}).
  However, if a large tile $H$ lies on
  a line on its front side, then
  both tiles $s(H)$, $l(H)$ cross that line
  (recall the diagram on page~\pageref{diagram}).
 Similarly, if a small tile $H$ 
  lies   on a line on its top side, then
  the tile $s(H)$ crosses that line.
 Hence  the  given shadow consists
  of bottoms of small tiles
and backs of large tiles. 
\begin{center}
\includegraphics[scale=1]{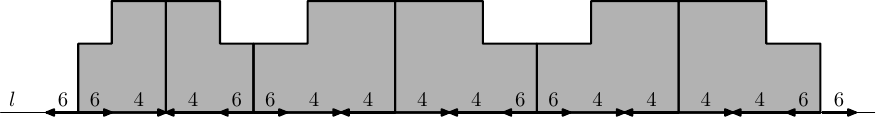}
        \end{center}
At the end of the bottom $\rsx\rfr$
of each small tile there is  
an orthogonal side (of the same tile) 
$\rfv\rtr\!\uparrow$
pointing \emph{to} the  interior of the half-plane (this is
easily verified by examining Fig.~\ref{f46}).
Only the back of another small tile can match that block $\rfv\rtr\!\uparrow$
and thus the bottom of each small tile 
$\rsx\rfr$ must be followed
by the symmetrical block $\lfr\lsx$. On the other hand, at the end
of the back side
$\rsx\rfr\lfr$ of every large tile there is a  
side  $\ltr\lfv\!\downarrow$ (of the same tile) pointing \emph{from} 
the  interior of the half-plane
(this is
easily verified by examining Fig.~\ref{f46}).  
Only the bottom of another large tile can match that 
block $\ltr\lfv\!\downarrow$ and thus the back $\rsx\rfr\lfr$ 
of every large 
tile must be followed by the block $\rfr\lfr\lsx$. 

This analysis shows that the shadow can be divided into blocks
$\rsx\rfr\lfr\lsx$ and $\rsx\rfr\lfr\rfr\lfr\lsx$. Such division
is unique, as the arrow on every digit 6 shows the direction to the 
block starting or ending by that digit.
We must attach to every block $\rsx\rfr\lfr\lsx$
a pair of small tiles lying on $l$ on their tops and sharing
their backs and to 
every block $\rsx\rfr\lfr\rfr\lfr\lsx$
a pair of large tiles lying on $l$ on their backs and sharing
their bottoms. 
\end{proof}

A similar lemma (with a similar proof) holds also 
for 3-5-shadows. However we do not need it, as we can finish the proof
as follows.

Using the   procedure of Claim 2,
we are able to reconstruct a given tiling $T$  from its shadow
in an arbitrarily large stripe 
along the border. Indeed, to
reconstruct $T$ in 
the stripe of width $d\psi^{2-i}$ near the border line,
first find the shadow
of the tiling $\sigma^{-i}T$ obtained from $T$ by $i$
compositions. Examining Fig.~\ref{orient} (page~\pageref{orient}),
it is not hard to verify that
the substitution transforms 
oriented colored segments according to the following rules 
\begin{align*}
            \rsix & \to \rfive \\
            \rfive & \to \rfour \\
            \rfour & \to \rthree \\
            \rthree & \to \lfour\lsix.
\end{align*}        
Applying to the given shadow the inverse 
map $i$ times (every $\lfour$ followed by $\lsix$
is replaced using the last line and remaining $\lfour$'s
are replaced using the second line), we are able to find
the shadow
of the tiling $\sigma^{-i}T$.
If it happens to be a 3-5-shadow then apply composition one
more time. 
Then by the procedure of Claim 2 we reconstruct the tiling $\sigma^{-i}T$
(or $\sigma^{-i-1}T$)
in the stripe of width $d\psi^{2-i}$ (or $d\psi^{2-i-1}$) 
near the border of the half-plane.
Finally, apply $i$ (or $i+1$) substitutions to the obtained tiling.

For instance, assume that we are given the shadow 
$$
\dots\lfour\rfour\lfour\lsix\rsix\rfour\lfour\rfour\lfour\lsix\rsix\rfour\lfour
\lsix\rsix\rfour\lfour\rfour\lfour\lsix\rsix\rfour\lfour\lsix\rsix\rfour\lfour\rfour
\dots
$$
and we want to reconstruct the tiling in the stripe of width $d$.
We apply the inverse map two times and get 
the shadow 
$$
\dots\lsix\rsix \rfour\lfour\lsix\rsix\rfour\lfour\rfour\lfour\lsix\rsix\rfour\lfour\rfour\lfour\lsix\rsix\dots.
$$
Then apply the procedure of Claim 2 to construct the tiling 
with this shadow: 
\begin{center}
\includegraphics[scale=1]{pic47.pdf}
        \end{center}
Then we apply substitution two times 
and get the sought tiling: 
\begin{center}
\includegraphics[scale=1]{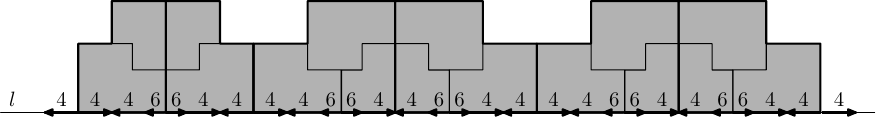}
        \end{center}
Lemma~\ref{l435} and Theorem~\ref{description}(a)
are proved.
\end{proof}

\subsubsection{The proof of Theorem~\ref{description}(b)}
\label{quadrant}
Let $S$ be an \arrows\  tiling of a half-plane,
which is not an infinite supertile. 
Let $l$ denote the border line of that half-plane. 
As we have seen, $S$ is
a disjoint union of two infinite supertiles $S_1,S_2$ tiling
quadrants. Let $r$ denote the ray that separates
those quadrants. Let $S_1',S_2'$ denote the reflections of $S_1,S_2$ 
in the axis $l$.

Then $S_1\cup S_2\cup S_1'\cup S_2'$ is an \arrows\  tiling of the entire plane,
which is a disjoint
union of tilings $S_1\cup S_1'$ and $S_2\cup S_2'$ of half-planes
separated by the line $r\cup r'$. By Theorem~\ref{description}(a)
the tilings
 $S_1\cup S_1'$ and $S_2\cup S_2'$ are reflections of each other in the
axis  $r\cup r'$, q.e.d.

\subsubsection{The proof of Theorem~\ref{description}(c).}
As we have seen, every \arrows\  tiling of a quadrant is an infinite supertile. 
Let us show that there are only three such tilings. 
Let $(H,\alpha)$ be a succinct representation 
of an infinite  supertile tiling a quadrant
where $H$ is a large tile.

By Proposition~\ref{th-id}
infinite supertiles with succinct representations 
$(H,\alpha)$ and $(G,\beta)$ are congruent iff 
$\aaa$ and $\beta$ are equivalent (we assume that both $H,G$ are large tiles). 
Recall 
that sequences  $\aaa,\beta$ of letters 
$l,s$ are equivalent if 
$\aaa=u\gamma$ and  $\beta=v\gamma$ for some 
$u,v$ of the same weighted length;
calculating the weighted length we count every letter 
$l$ with weight 1 and every letter $s$ with weight 2.

By Theorem~\ref{standard} the tiling with succinct representation $(H,\alpha)$ tiles a quadrant
iff $\aaa$ has a tail $slslslsl\dots$. 
Let us show that there are three
non-equivalent sequences $\aaa$ having such tail,
namely 
\begin{equation}\label{eq76}
slslslsl\dots, \qquad lslslslsl\dots,\qquad llslslslsl\dots
\end{equation}
Indeed, the weighted lengths of
the sequences $s$ and $ll$ coincide.
Thus replacing in any sequence any 
letter $s$ by $ll$ we get an equivalent sequence.
Vice verse, replacing any block $ll$ by $s$
we get an  equivalence sequence. 
Therefore every sequence with 
the tail $slslslsl\dots$
is equivalent to a sequence of the form
$uslslslsl\dots$ where 
$u$ is a finite sequence consisting only  of $l$'s, $u=ll\dots l$ (we replace
each $s$ before the tail by $ll$).  
Now replace in $u$ every triple of consecutive  
$l$'s by $sl$. The resulting sequence is equivalent 
to the original one and equals to 
one of the sequences~\eqref{eq76},
depending on the residue of the length of $u$ modulo 3.
 
On the other hand, as $w(sl)=3$
and $w(\text{empty word})$, $w(l)$, $w(ll)$
are not congruent modulo 3,
the three above sequences are pairwise non-equivalent. 
One can see in Fig.~\ref{f61}
\begin{figure}[t]
        \begin{center}
\includegraphics[scale=0.45]{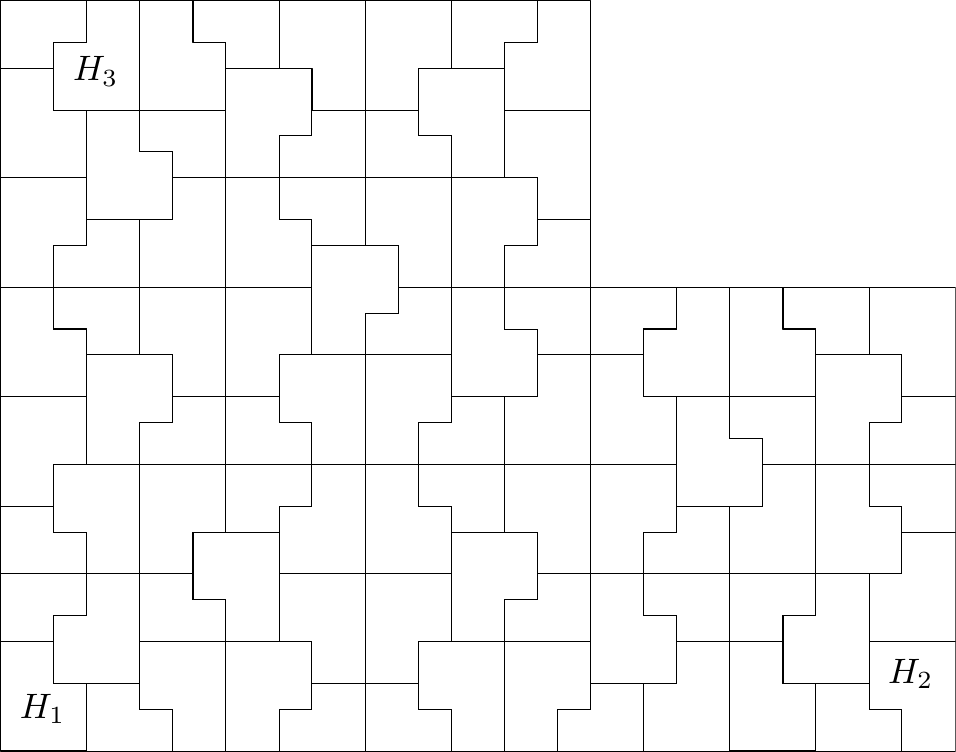}
        \end{center}
        \caption{\small The picture shows three different tilings
          of the quadrant: they have 
          succinct representation $(H_1,slslsl\dots)$,
          $(H_2,lslslsl\dots)$ and
           $(H_3,llslslsl\dots)$.}\label{f61}
\end{figure}
how the corresponding tilings 
of quadrants look like. 
The first one is obtained 
if we put the origin of the quadrant in the bottom left 
corner. 
To obtain the second tiling imagine that the origin of the quadrant
is in the bottom right corner. For the third
consider the top left corner. The substitution
transforms these tilings as follows: $1\to 2\to 3\to 1$.

\subsection{The proof of Theorem~\ref{th34}}

We are given an \arrows\ tiling $T$ of a convex set and have
to show that it is self-similar.
By Theorem~\ref{description}, if $T$
is not an infinite supertile (in which case we are done),
it consists either of two, or of four axial symmetrical infinite supertiles.

Consider the first case:   
$T=S_1\sqcup S_2$ where $S_1,S_2$ are infinite supertiles. 
Let $W$ be a finite  subset of $T$.
We have to show that $W$ is a subset of a supertile. 
Let $W_i=W\cap S_i$ for $i=1,2$. 
For an integer $n$, apply $n$ times composition
to tilings $S_1,S_2$.
If $n$ is large enough, then
the area tiled by $W_1$ is covered by a single tile $A_1$
from $\sigma^{-n}S_1$. By Remark~\ref{r1} on page~\pageref{r1}
w.l.o.g. we may assume that 
$A_1$ is a large tile and its back 
lies on the line $l$ separating $[S_1]$ from $[S_2]$.
The mirror image $A_2$  of $A_1$ belongs to $\sigma^{-n}S_2$. 
If $n$ is large enough then
the area tiled by $W_2$ is covered by $A_2$,
as shown on Fig.~\ref{f1a} (page~\pageref{f1a}).
\begin{figure}[t]
        \begin{center}
\includegraphics[scale=1]{pic25.pdf} \hskip 2cm
\includegraphics[scale=1]{pic24.pdf}\hskip 2cm
\includegraphics[scale=0.25]{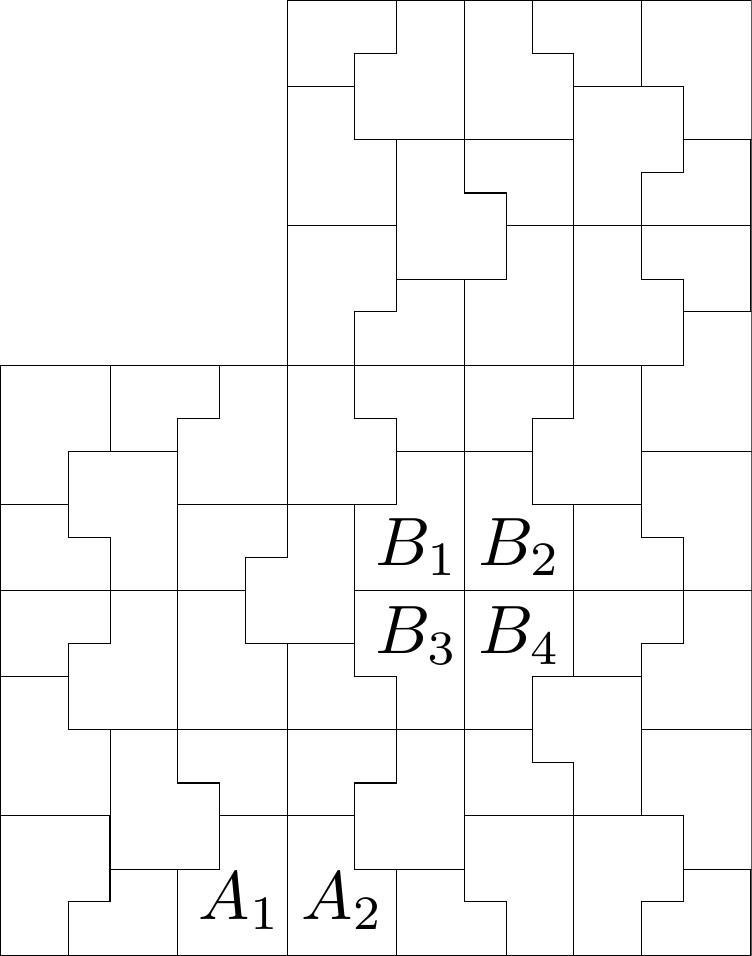}
        \end{center}
\caption{\small On the left: the pattern $W$
is covered by two symmetrical 
supertiles $\sigma^nA_1,\sigma^nA_2$. In the middle:
the pattern $W$ is covered by four symmetrical
supertiles $\sigma^nB_1,\sigma^nB_2,\sigma^nB_3,\sigma^nB_4$. On the right:
the supertile of level 8 includes
both patterns $\{A_1,A_2\}$ and $\{B_1,B_2,B_3,B_4\}$
}\label{f1a}
\end{figure}
It remains to notice the pattern consisting of two 
tiles sharing the their largest sides is included in the
supertile of level 8
(see Fig~\ref{f1a}).
Applying $n$ substitutions to that supertile 
we obtain a supertile including $W$.

Consider now the second case $T=S_1\sqcup S_2\sqcup S_3\sqcup S_4$
where $S_1,S_2,S_3,S_4$ are axial symmetrical 
infinite supertiles tiling  quadrants.
Let $W$ be a finite  subset of $T$
and let $W_i=T\cap S_i$  for $i=1,2,3,4$.
Arguing in a similar way we can show that 
for some $n$  
each set $[W_i]$ is covered by a single large tile  $B_i\in\sigma^{-n}T$. 
By Remark~\ref{r2} (page~\pageref{r2}) we may assume that 
$B_1,B_2,B_3,B_4$ are large tiles sharing the their backs and 
bottoms, as shown on Fig.~\ref{f1a}.
The pattern consisting of four large tiles shown in Fig.~\ref{f1a}
can be found in the supertile of level 8 (Fig.~\ref{f1a}).
Applying $n$ substitutions to that supertile 
we obtain a supertile including $W$.


\subsection{The proof of Theorem~\ref{th-ray}}

The tiling $T$ (a finite part of it)
is shown on Fig.~\ref{f48} (page~\pageref{f48}).
\begin{figure}[t]
        \begin{center}
\includegraphics[scale=0.7]{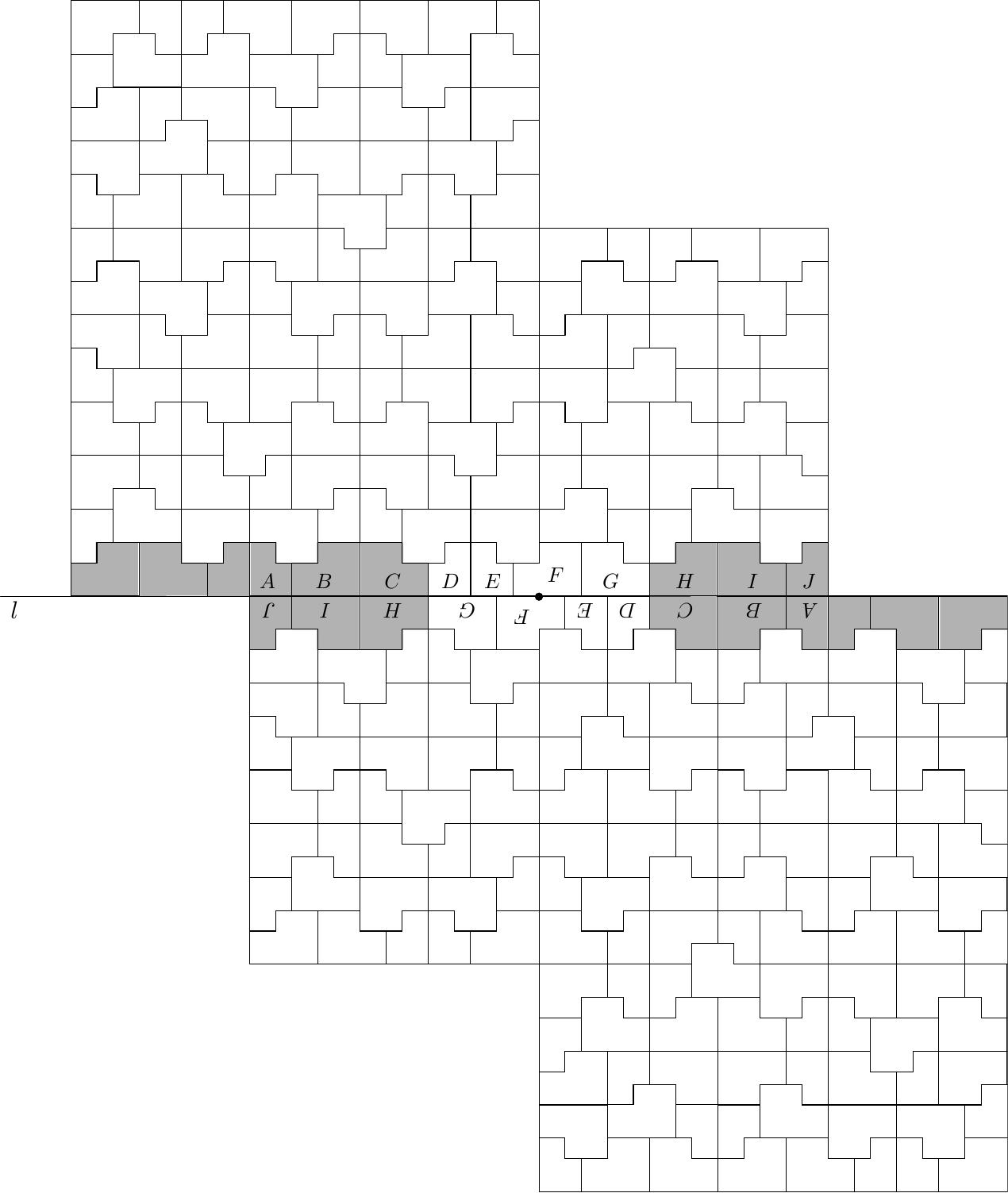}
        \end{center}
        \caption{\small The tiling $T$ is the union of
          tilings $T_{up}$ and $T_{bottom}$. The tiling  $T_{bottom}$ is
          obtained from  $T_{up}$ by rotation by $180^{\circ}$ around
          a point on the line $l$. Some tiles from $T$ have names. 
          }\label{f48}
\end{figure}
This tiling is defined as follows. 
Consider a large $d$-tile $F$ and the point $P$ on its
largest side (see Fig.~\ref{f51}
on page~\pageref{f51}). 
\begin{figure}[ht]
        \begin{center}
          \includegraphics[scale=0.1]{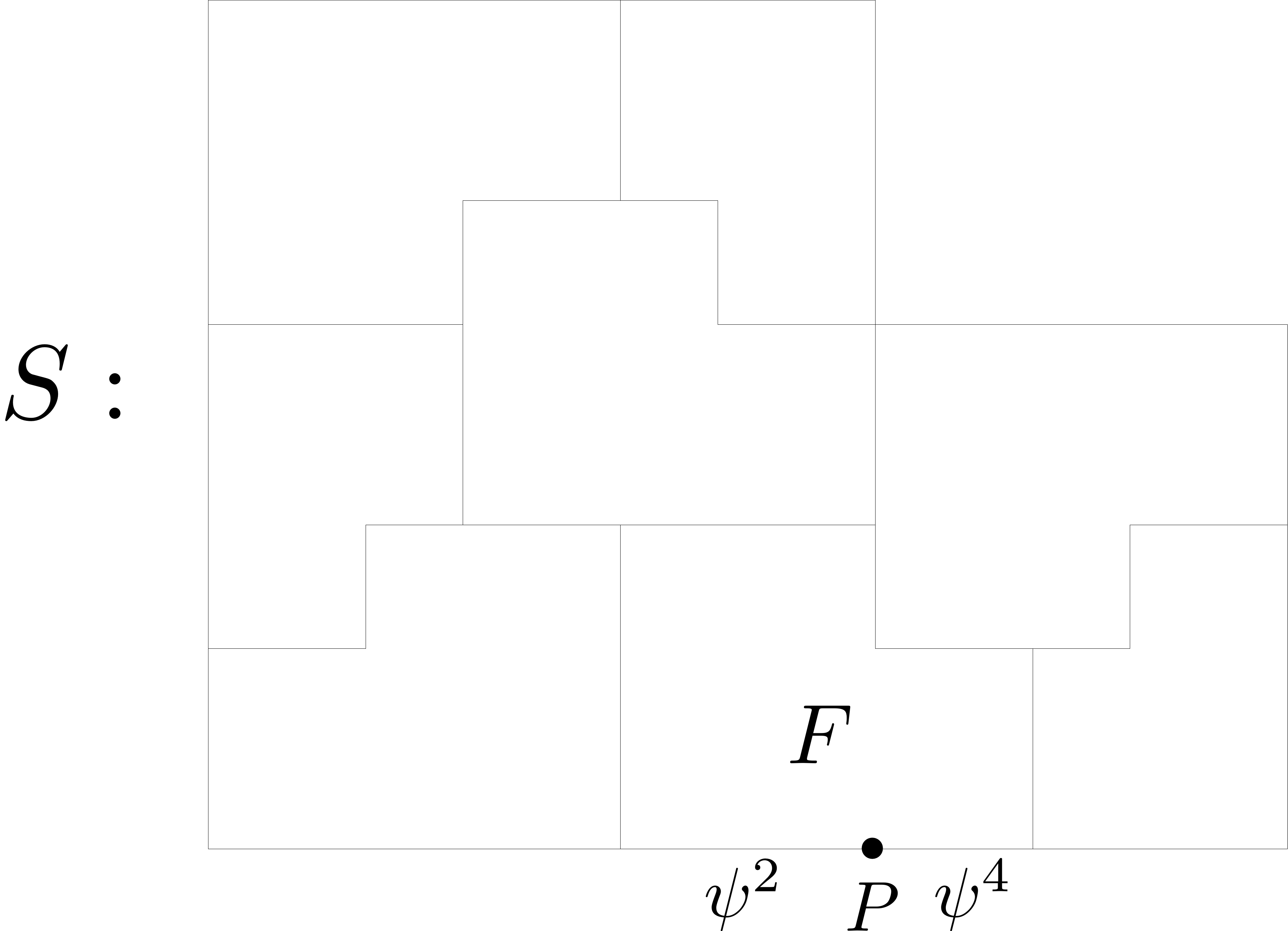}
        \end{center}
\caption{\small The picture shows 
  a $d$-supertile $S$ of level 4.
  It contains a large $d$-tile $F$.
  The supertile $S$ can be obtained from $F$
  by applying the homothety with center $P$
  and then applying four substitutions.
  The point $P$ divides the largest side of $F$ (of length
$d$) into segments of lengths  $d\psi^2,d\psi^4$.}\label{f51}
\end{figure}
Let $\HH$ denote the  homothety with center $P$ and ratio $\psi^{-4}$
and $\F$ the mapping $X\mapsto \sigma^4(\HH(X))$ on $d$-tilings.
Consider the supertile $\F(\{F\})$
(the supertile $S$ on Fig.~\ref{f51}).
It is easy to verify that it indeed contains the tile $F$.
This implies that
$$
\{F\}\subset \F(\{F\})\subset \F(\F(\{F\}))\subset \dots
$$
Let $T_{up}$ be the union of this chain of tilings:
$$
T_{up}= \{F\}\cup \F(\{F\})\cup \F(\F(\{F\}))\cup
\dots.
$$
One can see that
the tiling  $T_{up}$ is an infinite supertile with succinct representation
$(F,ssss\dots)$.

Now consider the rotation $\RR$ by $180^{\circ}$ around the point $P$
and let
$$
T_{bottom}=\RR(T_{up}) \text{ and } T=T_{up}\cup T_{bottom}.
$$
It is easy to see that both
$T_{up}$ and $T_{bottom}$ are fixed points of the mapping $\F$.

The theorem follows from two claims:
\begin{claimm}\label{claim1}
  The tiling $T$ satisfies the Arrow rule everywhere
  except for sides shared
  by pairs of tiles $(E,\RR(F))$, $(F,\RR(F))$ and $(F,\RR(E))$.
\end{claimm}
\begin{claim}\label{claim2}
  If $T'$ is an infinitely composable tiling of the plane that includes
  almost all tiles from $T$, then $T'=T$.
\end{claim}

\begin{proof}[Proof of Claim~\ref{claim1}]
  If two adjacent tiles from $T$ both belong to
  $T_{up}$ or both belong to $T_{bottom}$, then they
  satisfy the  Arrow rule, as both  $T_{up}$ and $T_{bottom}$
  are infinite supertiles. Therefore it remains to verify
  the Arrow rule is met for all tiles from
  $T\setminus\{F,\RR(F)\}$
  that are adjacent to the line $l$ separating $T_{up}$ from
  $T_{bottom}$ 
  (see Fig.~\ref{f48} on page~\pageref{f48}).
  For the pairs $(D,\RR(G))$, $(E,\RR(G))$, $(G,\RR(E))$, 
  $(G,\RR(D))$ this verification can be done by hand. 

  For the remaining tiles we can argue as follows.
  Let $V$ denote the set of all tiles from $T_{up}$ that are adjacent
  to $l$ and are not marked grey on Fig.~\ref{f48}, that is,
  $$
  V=\{D,E,F,G\}.
  $$
  Let $W$ denote the remaining tiles from $T_{up}$ that have
  names (they all are  marked grey), that is,
  $$
  W=\{A,B,C,H,I,J\}.
  $$
  The claim follows from the following two
  facts that can be verified by hand:\\
\emph{Fact 1.} $\F(V)=V\cup W \cup U$,
where $U$ is a set of tiles that all do not touch the line $l$
separating $T_{up}$ from  $T_{bottom}$
(we can find $U$ explicitly, however for
our argument we need only that all tiles from $U$
do not touch the line $l$).\\
\emph{Fact 2.}
Let $\SS$ denote the reflection in the axis $l$. Then
$\RR(W)=\SS(W)$. More specifically,
$\RR(A)=\SS(J)$,
$\RR(B)=\SS(I)$ and so on.

The definition of $T_{up}$ and 
the first fact imply that 
$$
T_{up}=V\cup\bigcup_{i=0}^{\infty} 
\F^i(W)\cup \bigcup_{i=0}^{\infty} \F^i(U).
$$
Indeed, it is easy to show by induction on $n$
that
$$
\F^n(\{F\})\subset V\cup\bigcup_{i=0}^{n} 
\F^i(W)\cup \bigcup_{i=0}^{n} \F^i(U)
$$
for all $n$.
Conversely, it is easy to see that $V\subset  \F^2(\{F\})$ hence
$V\cup W \cup U= \F(V)\subset \F^3(\{F\})$,
which implies that 
$$
\F^i(W)\cup \F^i(U) \subset \F^{i+3}(\{F\})\subset T_{up}
$$
for all $i$.

Similarly,
$$
T_{bottom}=\RR(V)\cup\bigcup_{i=0}^{\infty} 
\F^i(\RR(W))\cup \bigcup_{i=0}^{\infty} \F^i(\RR(U)).
$$
These representations of $T_{up}$ and $T_{bottom}$
and the second fact imply that all tiles from
$T_{up}\setminus V$ that are
adjacent to the line $l$ 
are  mirror images of tiles from $T_{bottom}\setminus \RR(V)$. Hence
the Arrow rule is met for those tiles.
\end{proof}

\begin{proof}[Proof of Claim~\ref{claim2}]
  Consider the chain of tilings
  $S_0\subset S_1\subset S_2\subset \dots$ where
$S_0=\{F,\RR(F)\}$ and $S_{i+1}=\F(S_i)$ (recall that $\F(S)=\sigma^4\HH(S)$).
The set $S_1=\F(S_0)$ is marked grey  
on Fig.~\ref{f62} (page~\pageref{f62}).
\begin{figure}[t]
        \begin{center}
\includegraphics[scale=0.7]{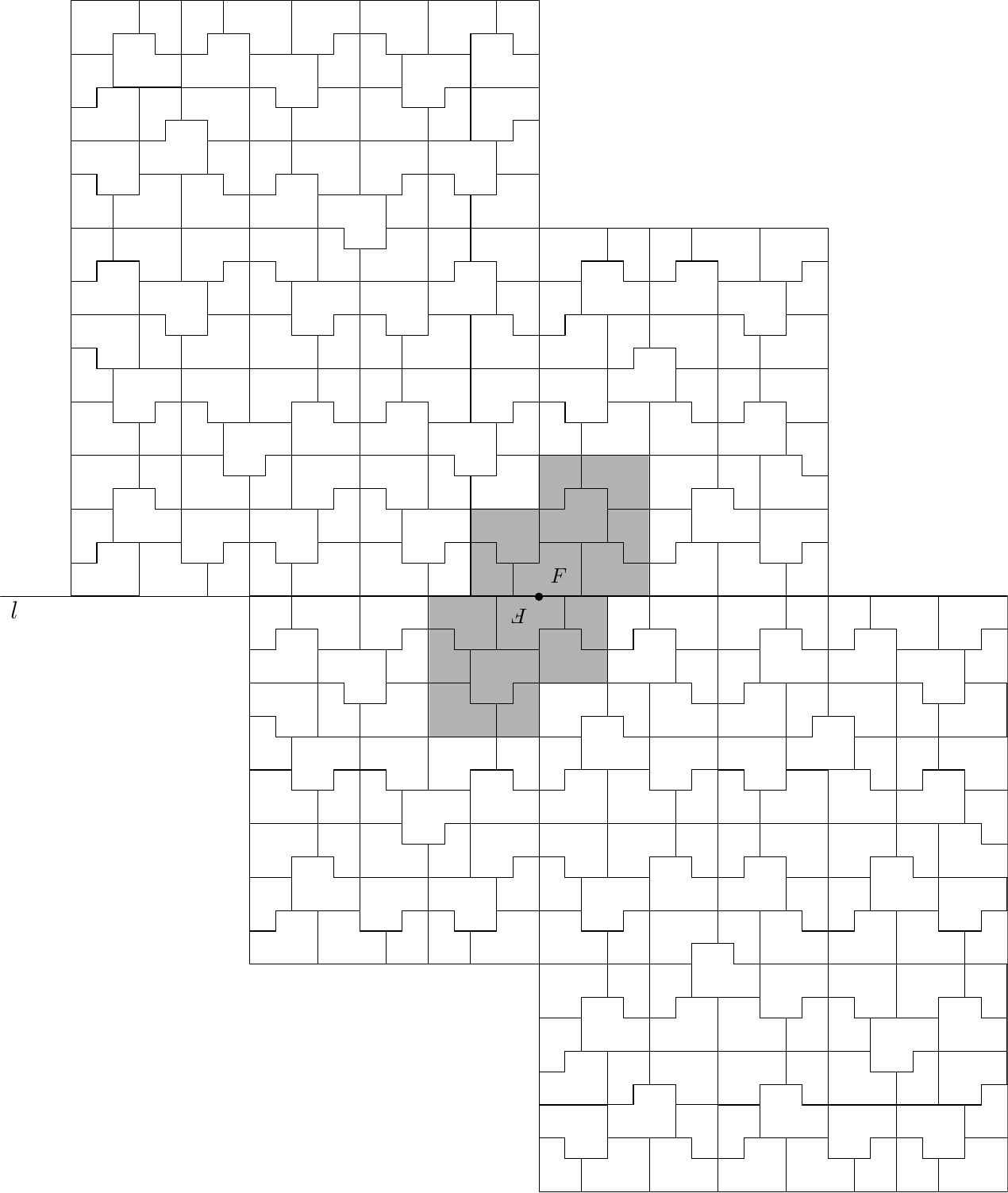}
        \end{center}
        \caption{\small The tiling $T$ includes the
          set $S_1=\F(\{F,\RR(F)\})$ marked grey.}\label{f62}
\end{figure}

The claim holds, since 
$T=\bigcup_{i=0}^{\infty}S_i$ and
$T$ is a fixed point of $\F$.
More specifically,
let $T'$ be an infinitely composable tiling of the plane that includes
almost all tiles from $T$.
Since $T=\bigcup_{i=0}^{\infty}S_i$, for some $i$ we have
\begin{equation}\label{eq3}
T\setminus S_i\subset T'.
\end{equation}

Assume first that $i=0$.
It is easy to verify by hand
that $T$ is the only tiling of the plane
that includes $T\setminus S_{0}$ and hence
$T'=T$. 

Assume that $i>0$.
We claim that in this case we have
\begin{equation}\label{eq4}
T\setminus S_{i-1}\subset \F^{-1}(T').
\end{equation}
To prove this claim,
apply $\sigma^{-4}$ to the inclusion~\eqref{eq3}.
We obtain
$$
\sigma^{-4}(T\setminus S_i)\subset \sigma^{-4}T'
$$
Since $T$ is the disjoint union of
$S_i$ and $T\setminus S_i$
we have
$$
\sigma^{-4}T=\sigma^{-4}S_i \sqcup \sigma^{-4}(T\setminus S_i)
$$
and hence
$$
\sigma^{-4}(T\setminus S_i)=\sigma^{-4}T\setminus \sigma^{-4}S_i.
$$
Thus
$$
\sigma^{-4}T\setminus \sigma^{-4}S_i\subset \sigma^{-4}T'.
$$
Applying $\HH^{-1}$ to this inclusion
we get~\eqref{eq4}, since $T$ is a fixed point of $\F$.

In the above arguments we implicitly
used Lemma~\ref{l0} several times.
To to show
that we may do that, we have to prove that all the tilings
of the form
$$
\sigma^{-j}S_i, \qquad \sigma^{-j}(T\setminus S_i),\qquad j=0,1,2,3
$$
are proper.
These tilings may be obtained from tilings
$S_0$ and $T\setminus S_0$ by applying substitution
$4i-j$ times and then applying a homothety.
If  $4i-j\ge 2$, then the tilings are proper,
since the can be obtained from some tilings by applying $\sigma^2$.
In the remaining case $i=1,j=3$
we  can  verify  by hand that the tilings
$\sigma^{-3}S_1$ and $\sigma^{-3}(T\setminus S_1)$ are proper.

  Repeating this trick $i$ times
  we can show
  that 
  $T\setminus S_{0}\subset \F^{-i}(T')$.
As we have seen, this implies 
  $\F^{-i}(T')=T$ and hence $T'=T$, as $T$ is a fixed point of $\F$.
\end{proof} 

\section*{Acknowledgments}
We are sincerely grateful to G.~Varouchas, 
M. Raskin and to anonymous referees for helpful remarks.
We thank A. Korotin for a careful reading 
a preliminary version of the text, which 
helped to improve essentially the exposition.

\appendix

\section{Appendix}
\subsection{The proof of Lemma~\ref{l0}}

(a) Every small tile $F$ from $S$ produces the same tile
  $(F\cup \text{the brother of }F)$ in  $\sigma^{-1}S$ and $\sigma^{-1}T$.
Every large tile $G$ from $S$ produces the same tile
$(F\cup \text{ the sister  of }G)$ in  $\sigma^{-1}S$ and $\sigma^{-1}T$,
if the sister  of $G$ is in $S$ (and hence in $T$).
Otherwise $S$ (and hence $T$) contains a tile $H$ that covers the
cavity of $G$. This tile is different from the sister of $G$.
Thus in this case the tile
$G$ produces itself in both $\sigma^{-1}S$ and $\sigma^{-1}T$.

(b) By item (a) both tilings $\sigma^{-1}S$ and
$\sigma^{-1}T$ are subsets of $\sigma^{-1}(S\cup T)$. Hence
$\sigma^{-1}S\cup \sigma^{-1}T\subset\sigma^{-1}(S\cup T)$.
This inclusion cannot be proper, as $\sigma^{-1}S\cup \sigma^{-1}T$
and $\sigma^{-1}(S\cup T)$ tile the same set.

\subsection{The proof of Proposition~\ref{standard-correct}}
(a) By induction: we will show that if a tiling $T$ is \arrows\ then  
so is $\sigma T$.
Assume that $T$ is an \arrows\ $d$-tiling. 
Color in the tiling $T$ all large 
and small tiles as shown on Fig.~\ref{orient}(b,c) (page~\pageref{orient}).
Such coloring will be called \emph{canonical}.
Assume that in the canonical coloring of $T$
the colors and orientations in all pairs of adjacent tiles match.
We have to show that the same holds for $\sigma T$.

To verify this, cut  
all large $d$-tiles from $T$, as shown on Fig.~\ref{orient}(a).
Then change the colored segments in the original 
canonical coloring using the following substitution:  
\begin{align*}
            \rsix & \to \rfive \\
            \rfive & \to \rfour \\
            \rfour & \to \rthree \\
            \rthree & \to \lfour\lsix.
\end{align*}        
The reverse arrows in $\lfour$ and  $\lsix$
mean that we reverse orientation. 
After that color the cut by colors
$\rfour$, $\rfive$, $\rsix$, as shown in Fig.~\ref{orient}(a).

As the transformation of colors of every segment 
does not depend on the tile it belongs to, it 
does not destroy the matching requirement. Therefore it remains to verify
that after transformation we get the canonical coloring of $\sigma T$.
This can be verified just by comparing 
Fig.~\ref{orient}(a),~\ref{orient}(b) and~\ref{orient}(c):
the transformation of colors applied to Fig.~\ref{orient}(c)
produces the coloring as in Fig.~\ref{orient}(b) and 
the transformation of colors applied to Fig.~\ref{orient}(b)
produces the coloring as in Fig.~\ref{orient}(a).

(b) Let $T$ be an  \arrows\  tiling of a convex set.
We have to show
that for every small tile $H\in T$ there is a  
large $G$ located as shown on Fig.~\ref{orient}(a).
Indeed, the cavity in $H$ formed by arrows $\rfive,\rsix$
is somehow filled by another tile $G$ in $T$. 
Notice that only large
tiles have a right angle with arrows $\rfive,\rsix$ thus
$G$ is a large tile. There is only one such angle in every large tile
and only one way to properly  attach a large tile to 
a small tile to fill the gap, namely the way shown in Fig.~\ref{orient}(a). 

(c)  Let $T$ be an \arrows\  tiling of a convex set.
We have to show that $\sigma^{-1}T$ is again an \arrows\ tiling. This is done
in a way similar to that in the proof of item (a).

The tiling $\sigma^{-1}T$ and its canonical 
coloring can be obtained from the canonical
coloring of $T$ in two steps:
erase sides $\lfour,\lfive,\lsix$ shared by each pair
(sister, brother),
to get the tiling $\sigma^{-1}T$ and then replace the colors using the map:
\begin{align*}
            \rsix\, \rfour & \to \lthree\\
            \rfive & \to \rsix \\
            \rfour & \to \rfive \\
            \rthree & \to \rfour. 
\end{align*}
Note that, after erasing sides shared by each pair
of small and large tile,
every occurrence of arrow $\rsix$
in a tile from $T$
is followed 
by an arrow $\rfour$  belonging to the same side of the same
tile. This is easy to verify looking at Fig.~\ref{orient}(b,c)
(the arrow $\rsix$ has two occurrences on sides of the large tile
and two occurrences on sides of the small tile; 
all they are followed by  $\rfour$
except for the side of a small tile that belongs to the cavity---but 
such sides have been erased).
An arrow $\rfour$ is replaced using the first rule, if it follows $\rsix$,
and using the third rule otherwise. 
In the  obtained coloring the colors and orientations match, 
as the transformation of every arrow does not depend
on the tile whose side it belongs to.

It remains to verify that the resulting coloring of $\sigma^{-1}T$
is indeed canonical. This can be verified
by comparing Fig.~\ref{orient}(a), Fig.~\ref{orient}(b) and
Fig.~\ref{orient}(c), as in the proof of item (a). 

\subsection{The proof of Proposition~\ref{l8}}

Items (a) and (c) 
follow from the following
        \begin{claima}
          Assume that $T$ is a finite supertile or an infinitely composable
          tiling. Assume that $H$ is any its tile (small or large).
          Then there is a unique sequence
          $H_0\subset H_1\subset H_2\subset\dots$
          that starts with $H$ and ends
          with $[T]$, if $T$ is a finite supertile, and is infinite,
if  $T$ is infinitely composable, and such that
          $H_i$ is either the son, or the daughter of $H_{i+1}$ for all
          $i$, and $\sss_d(H_i)\subset T$ for all $i$.
        \end{claima}  
        \begin{proof}
          For finite supertiles the statement can be proved by  induction
          on the level of $T$. If $T$ is a supertile
          of level $-1$ or 0, then the statement is obvious.
          Otherwise $T$ is a disjoint union  of supertiles $T'$ 
          and $T''$ of smaller levels, which tile the son and the daughter
          of $T$, respectively. Since  $T'$ 
and $T''$ are disjoint, we have either $H\in T'$, or 
$H\in T''$. In the first case the last but one tile
in the sought
sequence $H_0,H_1,H_2,\dots$
must be $[T']$ and the statement for $T$ follows from
the induction hypothesis for $T'$. Similarly,
in the second case the statement for $T$ follows from
the induction hypothesis for $T''$.

For infinitely composable tilings
we are unable to use similar arguments,
since the sequence $H_0, H_1, H_2,\dots$ must be infinite.
Let us first prove that such 
sequence exists.
For every $l$ consider the tiling 
$\sigma^{-l}T$ obtained from $T$ by $l$ compositions.
The tiling $T$ is  a
disjoint union of supertiles  $\sss_d(G)$
where $G\in \sigma^{-l}T$. 
Let $H_l$ denote the (unique) tile from $\sigma^{-l}T$
such that the supertile  $\sss_d(G)$ contains $H$.

We claim that for all $l$ the tile
$H_l$ is either the son, or the daughter of 
$H_{l+1}$, or $H_l$ coincides with $H_{l+1}$.
Indeed, the tiling $\sigma^{-l}T$ is the decomposition
of the tiling $\sigma^{-l-1}T$.
If  $H_{l+1}$ is a small tile in the tiling $\sigma^{-l-1}T$,
then its decomposition coincides with it, 
thus $H_{l+1}$ is in $\sigma^{-l}T$ 
and hence $H_l$ and $H_{l+1}$ coincide. Otherwise
the tiling 
$\sigma^{-l}T$ contains 
the result of decomposition of
$H_{l+1}$, that is, the 
son $F$ and the daughter $G$ of $H_{l+1}$. 
Since
$$
H\in \sss_d(H_{l+1})=S_d(F)\sqcup S_d(G),
$$
the tile $H$ belongs either to
$S_d(F)$, or to  $S_d(G)$.
In the first case  
$H_l$ must be equal to $F$ and otherwise $H_l=G$.

Removing repetitions from the sequence
$H_0, H_1,H_2, \dots$
we obtain the sought sequence of tiles.

Let us prove now that that such chain is unique.
Assume that there are two such chains
$H_0\subset H_1\subset H_2\subset\dots$
and 
$G_0\subset G_1\subset G_2\subset\dots$.
Let us show by induction on $n$ that $H_n=G_n$.
By assumption we have $H_0=G_0=H$.

Induction step: assume that  $H_n=G_n$.
By way of contradiction, assume
that $H_{n+1}\ne G_{n+1}$. Then the tile $H_n=G_n$
is the son of $H_{n+1}$ and the daughter of $G_{n+1}$
(or the other way around, but the other case in entirely similar).
Let $l$ stand for the level of the supertile
$\sss_{d}(H_n)=\sss_{d}(G_n)$.
The levels of supertiles $\sss_{d}(H_{n+1})$ and $\sss_{d}(G_{n+1})$
are $l+1$ and $l+2$ respectively.
By Lemma~\ref{l0}(a) both tilings 
$\sigma^{-l}\sss_{d}(H_{n+1})$, 
$\sigma^{-l}\sss_{d}(G_{n+1})$
are included into
$\sigma^{-l}T$.
The first tiling consists of $H_n$ and its sister.
The second one consists of $H_n$ and 
the son and the daughter of the brother of $H_n$.
Hence the tiling $\sigma^{-l}T$ contains 
the sister of $H_n$ and the son of the brother of $H_n$, which 
overlap  (see Fig.~\ref{f65} on page~\pageref{f65}).
\begin{figure}[t]
        \begin{center}
\includegraphics[scale=0.2]{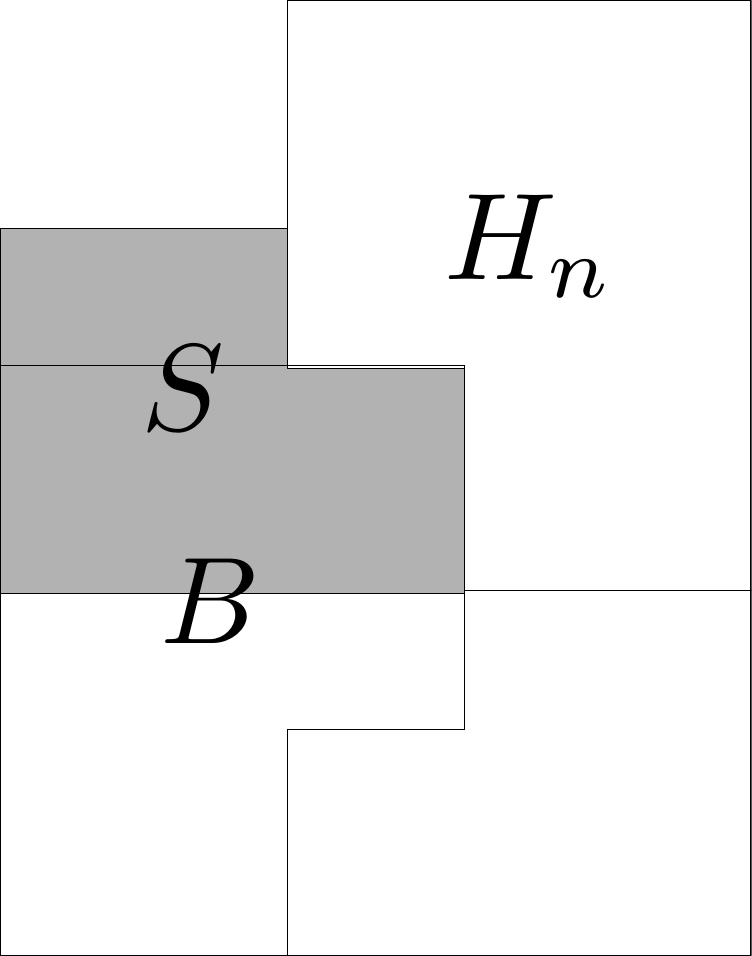}
        \end{center}
        \caption{\small The sister $S$ (marked grey)
          of the large tile $H_n$ overlaps
        with the son $B$ of the brother of $H_n$.}\label{f65}
\end{figure}
The obtained
contradiction proves that $H_{n+1}=G_{n+1}$.
\end{proof}

        (a)
        Let $T$ be an infinite supertile
        with representation $H_0\subset H_1\subset H_2\subset\dots$
        and $H$ any its tile.
Consider any $n$ such that $H\in \sss_d(H_n)$.
By the claim there is a chain of tiles
$G_0\subset G_1\subset \dots\subset G_l$ such that
$G_0=H$, $G_l=H_n$ and
$G_i$ is either the son, or the daughter of $G_{i+1}$ for all
$i$.
Then the sequence
$$
G_0, G_1,\dots G_l, H_{n+1},
H_{n+2}\dots
$$
is the sought representation of $T$.

The uniqueness part of the claim implies that such representation is unique.

(b) Let $F$ be any tile from $T$.
Then there are $k,l$ such that $F\in \sss_d(H_k)$ and  $F\in \sss_d(G_l)$.
Let $I_0,I_1,\dots,I_a$ denote the representation of $\sss_d(H_k)$
that starts with $F$
and $J_0,J_1,\dots,J_b$ the representation of $\sss_d(G_l)$ that starts with $F$.
Then both sequences of tiles
$$
I_0,I_1,\dots,I_a,H_{k+1},H_{k+2},\dots \quad 
J_0,J_1,\dots,J_b,G_{l+1},G_{l+2},\dots
$$
are   representations of $T$
that both start with $F$.
By item (a) these sequences coincide.
Note that $H_{k+i}$
is the  $(a+i)$th terms in the
first representation and $G_{l+j}$ is the 
$(b+j)$th in the
second representation (for all $i,j$).
Hence $H_{b+k+i}=G_{a+l+i}$ for all $i$.

(c) Consider the chain $H_0\subset H_1\subset H_2\subset\dots$
existing by the claim.
Then the union  $\cup_{n=1}^{\infty}\sss_d(H_n)$ is the sought supertile.
To prove uniqueness, notice that the representation of any supertile
$S$ satisfying the statement and starting with $H$
must satisfy the claim.


\subsection{Proof of Proposition~\ref{th-id}}

\emph{`If' part}.
Assume that
$\aaa=u\gamma$ and $\beta=v\gamma$ where $w(u)=w(v)$.
Then the supertiles with succinct representations $(H,u)$ and
$(G,v)$ have the same level (equal to $w(u)=w(v)$)
and hence are congruent. This implies that the infinite supertiles 
with succinct representations $(H,u\gamma)$ 
and $(G,v\gamma)$ are congruent as well. 

\emph{`Only if' part.}
We are given congruent infinite supertiles $T,S$
with succinct representations $(H,\alpha)$ and 
$(G,\beta)$, respectively.
W.l.o.g. we may assume that the tilings $T,S$
coincide (otherwise we apply 
to $H$ the isometry $f$ that maps  $T$ to 
$S$ and obtain another succinct representation
$(f(H),\alpha)$ of the supertile $S$). 

Thus we are given two different succinct 
representations $(H,\alpha)$ and $(G,\beta)$
of the same infinite supertile  
$S$. By Proposition~\ref{l8}(b) the corresponding
representations $(H=H_0), H_1,H_2,\dots$
and   $(G=G_0), G_1,G_2,\dots$
have the same tail, that is,
$H_{n+i}=G_{m+i}$ for some $m,n$ and all $i\ge 0$.
Thus  $\alpha=u\gamma$
and $\beta=v\gamma$ where 
$(H,u)$ and $(G,v)$ are succinct representations
of the $d$-supertile $\sss_d(H_n)=\sss_d(G_m)$,
and $\gamma$ is the infinite $s$-$l$-sequence
with
$$
\gamma_i=\begin{cases}
s,& \text{if $H_{n+i}=G_{m+i}$ is the daughter of $H_{n+i+1}=G_{m+i+1}$,}\\ 
l& \text{otherwise}.
\end{cases}
$$
We have $w(u)=w(v)$,
since $(H,u)$ and $(G,v)$ are succinct representations of the same supertile.

\end{document}